\newcommand{\N}{\mathbb{N}}
\newcommand{\R}{\mathbb{R}}
\newtheorem{remark}[theorem]{Remark}
\newcommand{\dd}{\textrm{d}}
\title{Polynomial collocation for handling an inaccurately known measurement configuration in electrical impedance tomography}
\author{N. Hyv\"onen\footnotemark[2]
\and V. Kaarnioja\footnotemark[2]
\and L. Mustonen\footnotemark[2]
\and S. Staboulis\footnotemark[3]
}
\begin{document}
\maketitle

\renewcommand{\thefootnote}{\fnsymbol{footnote}}

\footnotetext[2]{Aalto University, Department of Mathematics and Systems Analysis, P.O. Box 11100, FI-00076 Aalto, Finland (nuutti.hyvonen@aalto.fi, vesa.kaarnioja@aalto.fi, lauri.mustonen@aalto.fi). The work of these authors was supported by the Academy of Finland (decision 267789). The work of LM was also supported by the Finnish Foundation for Technology Promotion TES.}

\footnotetext[3]{Technical University of Denmark, Department of Applied Mathematics and Computer Science, Asmussens Alle, Building 322, DK-2800, Kgs. Lyngby, Denmark (ssta@dtu.dk). The work of this author was supported by the Danish Council for Independent Research (grant 4002-00123).}

\begin{abstract}
The objective of electrical impedance tomography is to reconstruct the internal conductivity of a physical body based on measurements of current and potential at a finite number of electrodes attached to its boundary. Although the conductivity is the quantity of main interest in impedance tomography, a real-world measurement configuration includes other unknown parameters as well: the information on the contact resistances, electrode positions and body shape is almost always incomplete. In this work, the dependence of the electrode measurements on all aforementioned model properties is parametrized via polynomial collocation. The availability of such a parametrization enables efficient simultaneous reconstruction of the conductivity and other unknowns by a Newton-type output least squares algorithm, which is demonstrated by two-dimensional numerical experiments based on both noisy simulated data and experimental data from two water tanks.
\end{abstract}

\renewcommand{\thefootnote}{\arabic{footnote}}

\begin{keywords}
Electrical impedance tomography, polynomial collocation, uncertainty quantification, Bayesian inversion, inaccurate measurement model, complete electrode model
\end{keywords}

\begin{AMS}
65N21, 35R30, 35R60, 60H15
\end{AMS}

\pagestyle{myheadings}
\thispagestyle{plain}
\markboth{N. HYV\"ONEN, V. KAARNIOJA, L. MUSTONEN, AND S. STABOULIS}{POLYNOMIAL COLLOCATION IN EIT}

\section{Introduction}
\label{sec:introduction}
The objective of {\em electrical impedance tomography} (EIT) is to reconstruct the conductivity/admittivity inside a physical body from boundary measurements of electric current and electromagnetic potential. EIT can be applied to,~e.g.,~medical imaging, process tomography, and nondestructive testing of materials \cite{Borcea02,Cheney99,Uhlmann09}. The most accurate way to model the measurements of EIT is employing the {\em complete electrode model} (CEM), which takes into account the electrode shapes and contact resistances/impedances caused by resistive layers at electrode-object interfaces~\cite{Cheng89,Somersalo92}. 

When EIT is used in practice, the conductivity is typically not the only unknown. In particular, the electrode positions, the contact resistances and the shape of the imaged object are also subject to uncertainties. For example, in a medical application the body shape and the contact resistances obviously depend on the patient, and one cannot assume precise information on the positioning of the electrodes. As it is well known that even slight mismodelling usually ruins the reconstruction of the conductivity in absolute EIT imaging~\cite{Barber88, Breckon88, Kolehmainen97}, not being able to account for such inaccuracies considerably hampers establishing EIT as a practical imaging modality. Since the contact resistances and the electrode locations can be (stably) estimated at the same time as the conductivity reconstruction is formed by a Newton-type algorithm~\cite{Darde12,Vilhunen02}, the most challenging of the aforementioned three sources of uncertainty is arguably the inaccurately known object shape. In the following, we present a brief survey of the previously introduced methods for recovering from uncertainties in the exterior boundary shape in EIT; for a more comprehensive discussion, see~\cite{Nissinen11b}.

{\em Difference} imaging is the simplest technique for handling uncertainties in the measurement set-up of EIT~\cite{Barber84}: Electrode measurements are performed at two time instants (or angular frequencies~\cite{Alberti16}) and the corresponding change in the conductivity (or admittivity) is reconstructed. The main idea is that the modelling errors partly cancel out when the difference data are formed, assuming there are no alterations in the boundary shape in between the two measurements, e.g., due to the breathing cycle of a patient. On the negative side, difference imaging is highly approximative as the theoretical grounds for its functionality rely on a linearization of the forward model. In addition, difference data are not always available. 

The first generic algorithm capable of coping with an unknown object boundary in {\it absolute} EIT imaging was introduced in two spatial dimensions by Kolehmainen, Lassas and Ola~\cite{Kolehmainen05,Kolehmainen07}. Allowing an oversimplification, their approach is based on compensating for the mismodelled geometry by reconstructing a (slightly) anisotropic conductivity. An obvious weakness of the ideas in~\cite{Kolehmainen05,Kolehmainen07} is the difficulty in generalizing the corresponding \emph{numerical algorithm} to three dimensions. The so-called approximation error methodology~\cite{Kaipio05} was successfully applied to EIT with an inaccurately known boundary shape in~\cite{Nissinen11,Nissinen11b}: The error caused by the uncertainties in the model geometry (and other nuisance parameters) is represented as an auxiliary measurement noise process whose second order statistics are approximated via simulations based on the prior probability models for the conductivity and the boundary shape. Subsequently, a reconstruction of the conductivity is formed within the Bayesian paradigm. The most straightforward approach to dealing with an inexactly known body shape in EIT was introduced in~\cite{Darde13a,Darde13b}, where the Fr\'echet derivative of the solution to the CEM with respect to the exterior boundary shape was employed in a regularized Newton-type output least squares algorithm that simultaneously reconstructs the conductivity, the contact resistances, the electrode positions and the exterior boundary of the imaged object. The main weakness of the algorithm in~\cite{Darde13a,Darde13b} lies with the numerical instability in the computation of the needed shape derivatives, which necessitates the use of relatively dense {\em finite element} (FE) meshes and thus slows down the computations to a certain extent.

This work tackles absolute EIT imaging with an unknown object shape by means of (stochastic) polynomial collocation. The conductivity, the contact resistances, the electrode positions and the boundary shape are parametrized by a finite number of, say a thousand, parameters supported in a bounded interval; in the framework of stochastic collocation~\cite{Babuska10}, these parameters would be interpreted as uniformly distributed random variables. The forward problem of the CEM is then treated as a parametric elliptic boundary value problem whose solution depends not only on the current feed and the spatial variable but also on the high-dimensional parameter vector. This forward problem is solved by a (stochastic) {\em collocation finite element method} (cFEM)~\cite{Babuska10}: The standard CEM problem is first solved with a {\em finite element method} (FEM) for the conductivities and measurement settings defined by an appropriate sparse grid of collocation points in the parameter hypercube, and subsequently the dependence of the forward solution on the parameters is generalized to the whole hypercube via collocation by tensor products of Legendre polynomials. In particular, such a procedure gives an approximate parametrization of the electrode potentials with respect to (the parameters defining) the conductivity, the contact resistances, the electrode positions and the object shape, which makes it possible to reconstruct these unknowns,~e.g.,~by Tikhonov regularization or via Bayesian inference. Indeed, the described approach results in a functional reconstruction algorithm that is tested both with simulated and experimental data in a two-dimensional setting. See \cite{Hiptmair15} for a closely related algorithm as well as related theory in inverse obstacle scattering. 

Compared to the previous methods for recovering from uncertainties in the geometric specifications of the measurement set-up in EIT, on a general level the introduced algorithm most closely resembles the approximation error technique employed in~\cite{Nissinen11,Nissinen11b}: Both require heavy off-line computations that can be performed prior to the measurements (to simulate the statistics of the approximation error process or to parametrize the dependence on the unknowns via polynomial collocation), but both also allow a fast on-line reconstruction phase once the measurements become available. For completeness, it should be mentioned that \cite{Hakula14,Hyvonen15} used a stochastic {\em Galerkin} FEM (cf.,~e.g.,~\cite{Schwab11a}) as a building block of a Bayesian reconstruction algorithm for EIT under the assumption that the electrode positions and the object shape are known. However, it seems difficult to apply a stochastic Galerkin FEM to handling uncertainties in the measurement configuration of EIT; see,~e.g.,~\cite{Hiptmair15} for similar conclusions.

This text is organized as follows. Section~\ref{sec:CEM} recalls the CEM and presents its parametric extension, while Section~\ref{sec:cFEM} describes how cFEM can be applied to the CEM. The actual implementation of the reconstruction algorithm is discussed in Section~\ref{sec:algo} and applied to both simulated and experimental data in Section~\ref{sec:numerics}. Finally, the conclusions are drawn in Section~\ref{sec:conclusion}.

\section{Complete electrode model and its parametric extension}
\label{sec:CEM}

This section introduces an extension of the CEM, allowing the use of parameter-dependent conductivities, contact resistances, electrode positions and boundary shapes. For a justification of the standard CEM, see~\cite{Cheng89,Somersalo92}. We work in two spatial dimensions and with $M \in \N \setminus \{1\}$ electrodes of the same known width, but the generalization to three dimensions and/or to the case of electrodes with unknown shapes is conceptually straightforward. 

\subsection{Parametrization of the measurement set-up}
\label{sec:parametr}
Let $N = N_{\sigma} + N_{\gamma} + 2M$ denote the number of parameters living in the hypercube
$$
\Upsilon = \Upsilon_\sigma \times \Upsilon_\gamma \times \Upsilon_E  \times \Upsilon_z = [-1/2,1/2]^N. 
$$
We decompose $y = (y_\sigma, y_\gamma, y_E, y_z) \in \Upsilon$, where the subvectors $y_\sigma \in \Upsilon_\sigma \subset \R^{N_\sigma}$, $y_\gamma \in \Upsilon_\gamma\subset \R^{N_\gamma}$, $y_E \in \Upsilon_E \subset\R^{M}$ and $y_z \in \Upsilon_z \subset \R^{M}$ correspond to the parametrizations of the conductivity field, the boundary curve, the electrode positions and the contact resistances, respectively.

Let us first introduce a parametrization for the boundary curve, that is, a continuous map
\begin{equation}
\label{para_bound}
\Upsilon_\gamma \ni y_\gamma \mapsto \gamma(\, \cdot \, , y_\gamma) \in C^{0,1}_L\big(\R; \R^2\big),
\end{equation}
where, for every $y_\gamma \in \Upsilon_\gamma$, $\gamma(\, \cdot \, , y_\gamma): [0,L) \to \R^2$ defines a bounded, closed, non-self-intersecting, Lipschitz curve parametrized in the counterclockwise direction. Here and in what follows, the subscript $L>0$ indicates that the elements of the considered function space are $L$-periodic. The domain enclosed by
$$
\Gamma(y_\gamma) := \big\{ \gamma(\phi, y_\gamma) \ | \ \phi \in [0,L) \big\} 
$$
is denoted $D(y_\gamma)$. We assume there exists a natural bi-Lipschitz homeomorphism
\begin{equation}
\label{homeo}
\Phi(\, \cdot \, , y_\gamma): D(y_\gamma) \to D(0)
\end{equation}
for all $y_\gamma \in \Upsilon_\gamma$. In our numerical tests, $D(0)$ is an origin-centered open disk and the domains $D(y_\gamma)$, $y_\gamma \in \Upsilon_\gamma$, are star-shaped with respect to the origin, meaning that one can define the mappings $\Phi(\, \cdot \,, y_\gamma)$ by suitably scaling the distance to the origin; see Section~\ref{sec:algo} for the details. 

Given a parametrization for the domain boundary, the position of an electrode is determined by a curve parameter corresponding to its starting point. To be more precise, after introducing a suitable mapping
$$
\Upsilon_E \ni y_E \mapsto \theta(y_E) \in [0,L)^M,
$$
the electrodes are parametrized by the set-valued functions,
\begin{equation}
\label{para_elec}
\Upsilon_\gamma \times \Upsilon_E \ni (y_\gamma, y_E) \mapsto
E_m(y_\gamma, y_E) \subset \Gamma(y_\gamma), \qquad m = 1, \dots, M,
\end{equation}
where 
$$
E_m(y_\gamma, y_E) := \big\{ \gamma(\phi,y_\gamma) \in \Gamma(y_\gamma) \ | \ 0 < {\rm dist} \big(\gamma(\theta_m(y_E),y_\gamma), \gamma(\phi,y_\gamma)\big) < \omega \big\} 
$$
with ${\rm dist}(x,z)$ denoting the distance between the points $x,z \in \Gamma(y_\gamma)$ along $\Gamma(y_\gamma)$ in the counterclockwise direction and $\omega$ being the known width of the electrodes. The mapping $\theta: \Upsilon_E \to [0,L)^M$ is assumed to be continuous, when $L$ is identified with $0$ on the image side, and to satisfy the condition
\begin{equation}
\label{nooverlap}
\min_{j \not= k} \min_{y_E \in \Upsilon_E} \min_{y_\gamma \in \Upsilon_\gamma} {\rm dist}\big(\gamma(\theta_j( y_E),y_\gamma), \gamma(\theta_k(y_E),y_\gamma)\big) > \omega
\end{equation}
which guarantees that the electrodes do not overlap, or change their order. (In our numerical tests, the parametrization is slightly simpler as the $m$th starting parameter $\theta_m$ depends only on the corresponding component of $y_E$.)

The conductivity field is parametrized by first introducing the dependence on $y_\sigma$ in the `unperturbed' reference domain $D(0)$ with the help of a continuous mapping
$$
\Upsilon_\sigma \ni y_\sigma \mapsto \sigma_0(\, \cdot \, , y_\sigma) \in L_+^\infty (D(0)):= 
\{ \kappa \in L^\infty(D(0)) \ | \ {\rm ess} \inf \kappa > 0 \},
$$
and then defining the actual domain-dependent parametrization via
\begin{equation}
\label{para_cond}
\Upsilon_\sigma \times \Upsilon_\gamma \ni (y_\sigma,y_\gamma) \mapsto \sigma(\, \cdot \, , y_\sigma,  y_\gamma) := \sigma_0\big(\Phi(\, \cdot \, , y_\gamma) ,y_\sigma \big) \in L_+^\infty (D(y_\gamma)).
\end{equation}
Finally, the contact resistances $z \in \R_+^M$ are parametrized simply by a continuous map
\begin{equation}
\label{para_res}
\Upsilon_z \ni y_z \mapsto z(y_z) \in \R_+^M,
\end{equation}
where, in fact, $z_m$, $m=1, \dots, M$, only depends on the corresponding component of~$y_z$.

In what follows, we often write $\gamma^y = \gamma(\, \cdot \, , y_\gamma)$, $\Gamma^y = \Gamma(y_\gamma)$, $D^y = D(y_\gamma)$, $\Phi^y = \Phi(\, \cdot \, , y_\gamma)$, $E^y_{m} = E_m(y_\gamma, y_E)$, $\sigma^y =  \sigma(\, \cdot \, , y_\sigma,  y_\gamma)$ and $z^y = z(y_z)$ to simplify the notation.

\subsection{Parameter-dependent CEM}
Assume that the parametrizations \eqref{para_bound}, \eqref{para_elec}, \eqref{para_cond} and \eqref{para_res} are given, denote by $\R^{M}_\diamond$ the mean-free subspace of $\R^{M}$, and let $I \in \R^{M}_\diamond$ define the net current feeds through the electrodes. According to the CEM~\cite{Cheng89}, for a fixed parameter vector $y \in \Upsilon$, the electromagnetic potential $u^y$ inside $D^y$ and the potentials $U^y \in \R^M$ on the electrodes satisfy the elliptic boundary value problem
\begin{align}
\label{pCEM}
\begin{array}{ll}
\nabla \cdot \big(\sigma^y \nabla u^y \big) = 0 \qquad  &\text{in} \ D^y, \\[8pt] 
{\displaystyle \frac{\partial u^y}{\partial \nu}} = 0 \qquad &\text{on} \ \Gamma^y\setminus \overline{E^y},\\[2mm] 
{\displaystyle u^y+z^y_m \sigma^y \frac{\partial u^y}{\partial \nu}}= U^y_m \qquad &\text{on} \ E^y_m, \quad m=1, \dots, M, \\[3mm] 
{\displaystyle \int_{E^y_m} \sigma^y \frac{\partial u^y}{\partial \nu} \, \dd S} = I_m, \qquad & m=1,\ldots,M, 
\end{array}
\end{align}
where $\nu = \nu(x)$ denotes the exterior unit normal of $\Gamma^y$ and $E^y = \cup_{m=1}^M E^y_m$. It follows immediately from the material in \cite{Somersalo92} and the properties of the parametrizations introduced in Section~\ref{sec:parametr} that \eqref{pCEM} has a unique solution $(u^y, U^y) \in (H^1(D^y) \oplus \R^M)/ \R =: \mathcal{H}^y$ for all $y \in \Upsilon$. Moreover, one can write a relatively explicit $y$-independent estimate for the $\mathcal{H}^y$-norm of $(u^y, U^y)$ as revealed by the following analysis.

The variational formulation of \eqref{pCEM} is to find $(u^y, U^y) \in \mathcal{H}^y$ such that~\cite{Somersalo92}
\begin{equation}
\label{pCEMvar}
B^y\big((u^y,U^y), (v,V) \big) \, = \, I \cdot V \qquad \textrm{for all } (v,V) \in \mathcal{H}^y, 
\end{equation}
where the bilinear form $B^y: \mathcal{H}^y \times \mathcal{H}^y \to \R$ is defined as
$$
B^y\big((w,W), (v,V) \big) \, = \, \int_{D^y} \sigma^y \nabla w \cdot \nabla v \, \dd x + \sum_{m=1}^M \frac{1}{z^y_m} \int_{E^y_m} (W_m-w) (V_m-v) \, \dd S .
$$
Let us define
\begin{equation}
\label{sigmaminmax}
\varsigma_{-} = \min_{y_\sigma \in \Upsilon_\sigma} {\rm ess} \inf \sigma_0(\,\cdot \,, y_\sigma), \qquad \varsigma_{+} = \max_{y_\sigma \in \Upsilon_\sigma} \| \sigma_0(\, \cdot \, , y_\sigma) \|_{L^{\infty}(D(0))}, 
\end{equation}
and
\begin{equation}
\label{zminmax}
\zeta_{-} = \min_{m } \min_{y_z\in \Upsilon_z} z_m(y_z), \qquad 
\zeta_{+} = \max_{m} \max_{y_z\in \Upsilon_z} z_m(y_z)  .
\end{equation}
Furthermore, let $C_{y, {\rm tr}} > 0$ be the norm of the trace operator
$$
{\rm tr}: v \mapsto v|_{\Gamma^y}, \ \  H^1(D^y) \to L^2(\Gamma^y),
$$
and $C_{y, {\rm P}}> 0$ be the Poincar\'{e}--Wirtinger constant for $D^y$, that is, the smallest constant such that
$$
\| v - \bar{v} \|_{H^1(D^y)} \leq C_{y, {\rm P}} \, \|\nabla v \|_{L^2(D^y)} \qquad \textrm{for all } v \in H^1(D^y),
$$
where $\bar{v} \in \R$ denotes the mean of $v$ over $D^y$. Finally, set
$$
C_{\rm tr} = \sup_{y \in \Upsilon} C_{y, {\rm tr}}, \qquad C_{\rm P} = \sup_{y \in \Upsilon} C_{y, {\rm P}} .
$$
Note that the dependence of the trace norm on the corresponding domain is an active research topic; see,~e.g.,~\cite{Rossi08} and the references therein. On the other hand, consult~\cite{Boulkhemair07} for a result that could be applied to  the Poincar\'{e}--Wirtinger constant in our setting. In the following, we simply assume that both $C_{\rm tr}$ and $C_{{\rm P}}$ are finite. 

\begin{theorem}
The bilinear form $B^y:  \mathcal{H}^y \times \mathcal{H}^y \to \R$ is uniformly bounded and coercive, that is,
$$
B^y\big((w,W), (v,V) \big) \, \leq \, \max \left\{ \varsigma_+ + \frac{2 C_{\rm tr}^2}{\zeta_-}, \frac{2 \omega}{\zeta_-} \right\} \|(w,W) \|_{\mathcal{H}^y}  \|(v,V) \|_{\mathcal{H}^y}   
$$
and
$$
B^y \big((v,V), (v,V) \big) \, \geq \, \left( \max \left\{ \frac{C_{\rm P}^2}{\varsigma_{-}} \Big(1 + \frac{2C_{\rm tr}^2}{\omega}\Big) , \frac{2 \zeta_+}{\omega} \right\} \right)^{-1} \|(v,V) \|_{\mathcal{H}^y}^2 
$$
for all $y \in \Upsilon$. 
\end{theorem}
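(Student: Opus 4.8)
The plan is to prove both estimates directly, reducing everything to the trace theorem, the Poincar\'e--Wirtinger inequality and the elementary fact that each electrode arc has the fixed length $|E^y_m| = \omega$. Throughout I would represent the elements of the quotient space $\mathcal{H}^y$ by their mean-free representatives, so that $\|(v,V)\|_{\mathcal{H}^y}^2 = \|v\|_{H^1(D^y)}^2 + |V|^2$ with $\bar v = 0$; this is what makes the Poincar\'e--Wirtinger constant directly applicable. As preliminaries I would note that $B^y$ is well defined on $\mathcal{H}^y$, since it annihilates every pair of the form $(c,c,\dots,c)$ (a constant function together with the corresponding constant vector) in either argument; that it is symmetric and positive semidefinite straight from its definition; and that $\varsigma_- \le {\rm ess}\inf \sigma^y$, ${\rm ess}\sup\sigma^y \le \varsigma_+$ and $\zeta_- \le z^y_m \le \zeta_+$ hold uniformly in $y$ because $\Phi^y$ is bi-Lipschitz and the contact resistances depend componentwise on $y_z$.

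For boundedness the key point is to exploit the symmetry and positivity of $B^y$: Cauchy--Schwarz for $B^y$ itself gives
$$
B^y\big((w,W),(v,V)\big) \le B^y\big((w,W),(w,W)\big)^{1/2}\, B^y\big((v,V),(v,V)\big)^{1/2},
$$
so it suffices to bound the diagonal term $B^y((v,V),(v,V))$ by $C\,\|(v,V)\|_{\mathcal{H}^y}^2$. (Estimating the two summands of $B^y$ separately and applying Cauchy--Schwarz to the sum over $m$ afterwards would produce a strictly larger constant than the one claimed.) For the diagonal term I would use $\int_{D^y}\sigma^y|\nabla v|^2 \le \varsigma_+\|\nabla v\|_{L^2(D^y)}^2$ together with
$$
\sum_{m=1}^M \frac{1}{z^y_m}\|V_m-v\|_{L^2(E^y_m)}^2 \le \frac{2}{\zeta_-}\Big(\omega|V|^2 + \sum_{m=1}^M\|v\|_{L^2(E^y_m)}^2\Big) \le \frac{2\omega}{\zeta_-}|V|^2 + \frac{2C_{\rm tr}^2}{\zeta_-}\|v\|_{H^1(D^y)}^2,
$$
where I used $|E^y_m|=\omega$, the disjointness of the electrodes guaranteed by \eqref{nooverlap} to get $\sum_m\|v\|_{L^2(E^y_m)}^2 \le \|v\|_{L^2(\Gamma^y)}^2$, and the trace inequality. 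Grouping the $\|v\|_{H^1(D^y)}^2$ and $|V|^2$ contributions gives $B^y((v,V),(v,V)) \le \max\{\varsigma_+ + 2C_{\rm tr}^2/\zeta_-,\ 2\omega/\zeta_-\}\,\|(v,V)\|_{\mathcal{H}^y}^2$, which combined with the displayed inequality yields the stated bound.

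For coercivity I would start from $B^y((v,V),(v,V)) \ge \varsigma_-\|\nabla v\|_{L^2(D^y)}^2 + \zeta_+^{-1}\sum_m\|V_m-v\|_{L^2(E^y_m)}^2$ and reconstruct the full norm out of these two quantities. Poincar\'e--Wirtinger (with $\bar v = 0$) gives $\|v\|_{H^1(D^y)}^2 \le C_{\rm P}^2\|\nabla v\|_{L^2(D^y)}^2$. For the electrode potentials, $\omega|V_m|^2 = \|V_m\|_{L^2(E^y_m)}^2 \le 2\|V_m-v\|_{L^2(E^y_m)}^2 + 2\|v\|_{L^2(E^y_m)}^2$; summing over $m$ and using disjointness, the trace inequality and Poincar\'e--Wirtinger once more gives $|V|^2 \le \frac{2}{\omega}\sum_m\|V_m-v\|_{L^2(E^y_m)}^2 + \frac{2C_{\rm tr}^2 C_{\rm P}^2}{\omega}\|\nabla v\|_{L^2(D^y)}^2$. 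Adding the two estimates,
$$
\|(v,V)\|_{\mathcal{H}^y}^2 \le C_{\rm P}^2\Big(1+\frac{2C_{\rm tr}^2}{\omega}\Big)\|\nabla v\|_{L^2(D^y)}^2 + \frac{2}{\omega}\sum_{m=1}^M\|V_m-v\|_{L^2(E^y_m)}^2,
$$
and matching these two coefficients against $\varsigma_-$ and $\zeta_+^{-1}$ in the lower bound for $B^y((v,V),(v,V))$ produces exactly the coercivity constant in the statement.

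I expect no serious obstacle: the only analytic inputs are the trace theorem and the Poincar\'e--Wirtinger inequality, and their uniformity over $y\in\Upsilon$ has already been built into the finiteness of $C_{\rm tr}$ and $C_{\rm P}$. The one thing to be careful about is the order of operations in the boundedness proof---using Cauchy--Schwarz for $B^y$ \emph{before} estimating rather than after---since that is what makes the constant come out in the sharp symmetric form; beyond that, the proof is just careful bookkeeping of the factors of $2$ and of which estimate is fed into which.
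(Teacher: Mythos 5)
Your proof is correct, and it is essentially the argument that the paper delegates to the reference \cite{Hyvonen04} (``keeping track of the constants in the proof of Lemma~2.5''): the only ingredients there are likewise the trace inequality, the Poincar\'e--Wirtinger inequality, the fact that $|E^y_m|=\omega$, and the Cauchy--Schwarz inequality for the symmetric positive semidefinite form $B^y$, and your bookkeeping reproduces the stated constants exactly. One small caveat on the very point the paper's proof flags (the difference between the $H$-norm of \cite{Hyvonen04} and the quotient norm of $\mathcal{H}^y$): since $\|(v,V)\|_{\mathcal{H}^y}^2$ is an \emph{infimum} over shifts $c\in\R$, choosing the mean-free representative gives only $\|(v,V)\|_{\mathcal{H}^y}^2 \le \|v\|_{H^1(D^y)}^2 + |V|^2$ rather than equality --- this is harmless here, because your diagonal upper bound uses nothing about $\bar v$ and therefore holds for every representative (so it survives the infimum), while the coercivity step only needs the inequality in the direction just displayed, but the identification should be stated as an inequality rather than an equality.
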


\begin{proof}
The result follows by keeping track of the constants in \cite[Proof of Lemma~2.5]{Hyvonen04} and accounting for the slight difference  between the $H$-norm employed in \cite{Hyvonen04} and the natural norm of $\mathcal{H}^y$, i.e.,
$$
\|(v,V)\|_{\mathcal{H}^y}^2 \, := \, \inf_{c \in \R} \left( \| v - c \|_{H^1(D^y)}^2 + | V - c \, \mathbf{1} |^2 \right) \,
$$
where $\mathbf{1} = (1, \dots, 1) \in \R^M$. \quad 
\end{proof}

\begin{corollary}
The solution of \eqref{pCEM} satisfies the uniform bound
\begin{equation}
\label{estimate}
\| (u^y, U^y) \|_{\mathcal{H}^y} \, \leq \, \max \left\{ \frac{C_{\rm P}^2}{\varsigma_{-}}\Big(1 + \frac{2C_{\rm tr}^2}{\omega}\Big) , \frac{2 \zeta_+}{\omega} \right\} | I |
\end{equation}
for all $y \in \Upsilon$.
\end{corollary}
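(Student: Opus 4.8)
The plan is to test the weak formulation \eqref{pCEMvar} against the solution itself and feed the outcome into the coercivity bound of the preceding theorem. Taking $(v,V) = (u^y,U^y)$ in \eqref{pCEMvar} gives the energy identity $B^y\big((u^y,U^y),(u^y,U^y)\big) = I\cdot U^y$, and the lower bound of the theorem turns this into
$$
\Big(\max\Big\{\tfrac{C_{\rm P}^2}{\varsigma_{-}}\big(1 + \tfrac{2C_{\rm tr}^2}{\omega}\big),\, \tfrac{2\zeta_{+}}{\omega}\Big\}\Big)^{-1} \|(u^y,U^y)\|_{\mathcal{H}^y}^2 \ \le \ I\cdot U^y .
$$
It then remains to bound the right-hand side by $|I|\,\|(u^y,U^y)\|_{\mathcal{H}^y}$; dividing through by $\|(u^y,U^y)\|_{\mathcal{H}^y}$ (the case of a vanishing norm being trivial) yields \eqref{estimate}.

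For the remaining estimate I would exploit that $I$ lies in the mean-free subspace $\R^{M}_\diamond$, so that $I\cdot(c\,\mathbf{1}) = 0$ and hence $I\cdot U^y = I\cdot(U^y - c\,\mathbf{1})$ for every $c\in\R$. Choosing $c = c_\ast$ to be the minimizer of the strictly convex quadratic $c \mapsto \|u^y - c\|_{H^1(D^y)}^2 + |U^y - c\,\mathbf{1}|^2$ --- which realizes the infimum in the definition of the quotient norm $\|(u^y,U^y)\|_{\mathcal{H}^y}$ --- and applying the Cauchy--Schwarz inequality in $\R^M$ gives
$$
|I\cdot U^y| \ = \ |I\cdot(U^y - c_\ast\mathbf{1})| \ \le \ |I|\,|U^y - c_\ast\mathbf{1}| \ \le \ |I|\,\|(u^y,U^y)\|_{\mathcal{H}^y},
$$
which is exactly what is needed. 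Since $C_{\rm P}$, $C_{\rm tr}$, $\varsigma_\pm$ and $\zeta_\pm$ are, by construction, independent of $y$, the resulting constant is uniform over $\Upsilon$.

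There is no genuine obstacle here: the argument is the standard Lax--Milgram-type a priori estimate. The only point deserving a line of care is that $\|\cdot\|_{\mathcal{H}^y}$ is a quotient norm defined through an infimum over representatives, and that this infimum is precisely compensated by the mean-free constraint on the current pattern $I$; everything else reduces to the coercivity already established and a single application of Cauchy--Schwarz.
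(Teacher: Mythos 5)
Your proof is correct and is essentially the paper's argument written out in full: the paper simply states that the bound is ``a direct consequence of the Lax--Milgram lemma,'' and your derivation (testing with the solution, invoking the coercivity constant from the preceding theorem, and bounding the dual norm of $(v,V)\mapsto I\cdot V$ by $|I|$ via the mean-free property of $I$ and the quotient-norm infimum) is precisely the standard Lax--Milgram a priori estimate specialized to this setting.
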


\begin{proof}
The claim is a direct consequence of the Lax--Milgram lemma. \quad
\end{proof}

For the convergence of (standard) FEM, it is essential to have control over the behavior of the higher Sobolev norms of $u^y \in H^1(D^y)/ \R$. To this end, denote by $C_{y,\epsilon} > 0$ the norm of the zero continuation operator from $H^{1/2 -\epsilon}(E^y)$ to $H^{1/2 -\epsilon}(\Gamma^y)$, $0 < \epsilon < 1$, and by $\tilde{C}_{y,\epsilon}>0$ the norm of the solution mapping
$$
H^{1/2 - \epsilon}(\Gamma^y) \ni f^y \mapsto v^y \in H^{2 - \epsilon}(D^y)/ \R,
$$
corresponding to the Neumann problem
$$
\nabla \cdot (\sigma^y \nabla v^y) = 0 \quad {\rm in} \ D^y, \qquad
\sigma^y  \frac{\partial v^y}{\partial \nu}  = f^y \quad {\rm on} \  \Gamma^y.
$$
Moreover, let $\hat{C}_{y}$ be the norm of the Neumann-to-Dirichlet map
$$
L^2_\diamond(\Gamma^y) \ni f^y \mapsto v^y|_{\Gamma_y} \in H^{1}(\Gamma^y)/ \R,
$$
where $L^2_\diamond(\Gamma^y)$ is the mean-free subspace of $L^2(\Gamma^y)$.
Finally, set
$$
C_{\epsilon} = \sup_{y \in \Upsilon} C_{y, \epsilon}, \qquad \tilde{C}_{\rm \epsilon} = \sup_{y \in \Upsilon} \tilde{C}_{y, \epsilon}, \qquad \hat{C} = \sup_{y \in \Upsilon} \hat{C}_{y}.
$$
It is once again assumed that the parametrization of our measurement setting is regular enough to make these definitions unambiguous as well as $C_{\epsilon}$, $\tilde{C}_{\rm \epsilon}$ and $\hat{C}$ finite for the considered $0 < \epsilon < 1$ (cf.~\cite{Adams03, Lions72}).

\begin{corollary}
Let $C_1>0$ be the constant on the right-hand side of \eqref{estimate}. 
For any $0 < \epsilon < 1$, the first part of the solution to \eqref{pCEM} satisfies the uniform bound
\begin{equation}
\label{second_estimate}
\| u^y \|_{H^{2-\epsilon}(D^y) / \R} \, \leq \, 
\frac{\sqrt{2} C_\epsilon \tilde{C}_\epsilon}{\zeta_-}  \Big( \frac{\hat{C}}{\zeta_-} + 1 \Big)\max\{ C_{\rm tr}, \sqrt{\omega}\} \, C_1 |I|  
\end{equation}
for all $y \in \Upsilon$.
\end{corollary}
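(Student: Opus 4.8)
The plan is to observe that, modulo an additive constant, $u^y$ solves exactly the kind of pure Neumann problem introduced just above the statement, with flux datum $f^y:=\sigma^y\,\partial u^y/\partial\nu$ on $\Gamma^y$. By the electrode conditions in \eqref{pCEM}, this datum vanishes on $\Gamma^y\setminus\overline{E^y}$ and equals $(U^y_m-u^y)/z^y_m$ on each $E^y_m$, while integrating $\nabla\cdot(\sigma^y\nabla u^y)=0$ over $D^y$ shows $\int_{\Gamma^y}f^y\,\dd S=\sum_m I_m=0$, i.e.\ $f^y\in L^2_\diamond(\Gamma^y)$. Since $f^y$ is its own zero-continuation from $E^y$ to $\Gamma^y$, chaining the definitions of $\tilde C_\epsilon$ and $C_\epsilon$ gives
\begin{equation*}
\|u^y\|_{H^{2-\epsilon}(D^y)/\R}\,\le\,\tilde C_\epsilon\,\|f^y\|_{H^{1/2-\epsilon}(\Gamma^y)}\,\le\,C_\epsilon\tilde C_\epsilon\,\|f^y\|_{H^{1/2-\epsilon}(E^y)},
\end{equation*}
so it remains to estimate $\|f^y\|_{H^{1/2-\epsilon}(E^y)}$ in terms of $\|(u^y,U^y)\|_{\mathcal{H}^y}$, which the previous corollary already bounds by $C_1|I|$.

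Since $1/2-\epsilon<1$ and all electrodes share the fixed width $\omega$, the monotonicity $\|f^y\|_{H^{1/2-\epsilon}(E^y)}\le\|f^y\|_{H^1(E^y)}$ of the Sobolev norms reduces matters to estimating $\|f^y\|_{H^1(E^y)}$. On each $E^y_m$ the numbers $U^y_m$ and $z^y_m$ are constant, so $f^y|_{E^y_m}=(U^y_m-u^y)/z^y_m$ has arc-length derivative $-\,\partial_s u^y/z^y_m$; using $z^y_m\ge\zeta_-$, summing over $m$, bounding the restriction of $\partial_s u^y$ to $E^y$ by its norm over $\Gamma^y$, and then invoking the Neumann-to-Dirichlet bound $\|u^y|_{\Gamma^y}\|_{H^1(\Gamma^y)/\R}\le\hat C\,\|f^y\|_{L^2(\Gamma^y)}$ together with $\|f^y\|_{L^2(\Gamma^y)}=\|f^y\|_{L^2(E^y)}\le\|U^y-u^y\|_{L^2(E^y)}/\zeta_-$, one reaches (via $\sqrt{1+t^2}\le 1+t$ for $t\ge0$)
\begin{equation*}
\|f^y\|_{H^1(E^y)}\,\le\,\frac{1}{\zeta_-}\Big(1+\frac{\hat C}{\zeta_-}\Big)\,\|U^y-u^y\|_{L^2(E^y)},
\end{equation*}
where $U^y-u^y$ abbreviates the function on $E^y$ equal to $U^y_m-u^y$ on $E^y_m$.

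Finally, $\|U^y-u^y\|_{L^2(E^y)}$ must be controlled by the energy norm. Choosing $c\in\R$ to attain the infimum defining $\|(u^y,U^y)\|_{\mathcal{H}^y}$ and replacing $(u^y,U^y)$ by $(u^y-c,U^y-c\,\mathbf{1})$, which leaves $U^y-u^y$ unchanged, the triangle inequality, the identities $|E^y_m|=\omega$, the trace bound $\|u^y-c\|_{L^2(\Gamma^y)}\le C_{\rm tr}\|u^y-c\|_{H^1(D^y)}$, and $a+b\le\sqrt{2}\,\sqrt{a^2+b^2}$ yield
\begin{equation*}
\|U^y-u^y\|_{L^2(E^y)}\,\le\,\sqrt{2}\,\max\{C_{\rm tr},\sqrt{\omega}\}\,\|(u^y,U^y)\|_{\mathcal{H}^y}\,\le\,\sqrt{2}\,\max\{C_{\rm tr},\sqrt{\omega}\}\,C_1|I|,
\end{equation*}
the last inequality being \eqref{estimate}. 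Multiplying the three displays reproduces \eqref{second_estimate}. The only genuinely delicate points are verifying that $f^y$ is mean-free and actually lies in the spaces on which the solution operator and the Neumann-to-Dirichlet map act, and the use of $\|\cdot\|_{H^{1/2-\epsilon}(E^y)}\le\|\cdot\|_{H^1(E^y)}$ with implicit constant one, which hinges on the chosen normalization of the fractional Sobolev norms; the rest is bookkeeping of constants.
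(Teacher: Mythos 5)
Your argument is correct and follows the paper's proof essentially verbatim: the opening chain of inequalities through $\tilde C_\epsilon$ and $C_\epsilon$ is identical, and the remaining estimates on $\|\sigma^y\partial u^y/\partial\nu\|_{H^1(E^y)}$ via the Robin condition, the Neumann-to-Dirichlet bound and the trace/energy-norm bookkeeping are exactly the content of the lemmas from \cite{Hanke11b} and \cite{Hyvonen09} that the paper cites without reproducing. The only difference is that you spell out those details (and the mean-free check on the flux datum) explicitly, which makes your version more self-contained but not a different proof.
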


\begin{proof}
By definition,
$$
\| u^y \|_{H^{2-\epsilon}(D^y)/ \R}  \leq  \, \tilde{C}_{\epsilon} \left\| \sigma^y \frac{\partial u^y}{\partial \nu} \right\|_{H^{1/2 - \epsilon}(\Gamma^y)}  
\leq \, C_{\epsilon} \tilde{C}_{\epsilon} \left\| \sigma^y \frac{\partial u^y}{\partial \nu} \right\|_{H^{1}(E^y)},
$$ 
where we also used the trivial embedding $H^{1}(E^y) \subset H^{1/2 - \epsilon}(E^y)$ to deduce the second inequality.
Now the claim follows by carefully keeping track of the constants in \cite[Proof of Lemma 3.1]{Hanke11b} and \cite[Proof of Lemma 2.1]{Hyvonen09}. \qquad
\end{proof}

Observe that the constants appearing on the right-hand side of \eqref{second_estimate} are not independent of each other: For example, the norm of the zero continuation $C_\epsilon$ certainly depends on $\omega>0$ and obviously $\hat{C}$, $\tilde{C}_\epsilon$ and $C_{\rm tr}$ are intimately connected. Moreover, the estimate \eqref{second_estimate} is not optimal; as an example, consult \cite{Costabel96,Darde16} for more careful analysis of the dependence on $\zeta_-$. Be that as it may, \eqref{second_estimate} arguably gives a general idea of how the parametrization of the  measurement configuration affects the bound on the $H^{2-\epsilon}(D^y)$-norm of the interior potential.

\begin{remark}
Although the estimate on the $H^{2-\epsilon}(D^y)$-norm of the electromagnetic potential \eqref{second_estimate} is connected to the accuracy of the numerical forward solution at the chosen sparse grid points over $\Upsilon$ (cf.~Section~\ref{sec:cFEM}), from the standpoint of efficient polynomial collocation it would be more important to prove analytic dependence of the solution pair $(u^y, U^y)$ on the parameter vector $y \in \Upsilon$; see,~e.g.,~\cite{Babuska10}. However, such investigations are left for future studies.
\end{remark}

In the following, we systematically choose the ground level of potential by identifying $(H^1(\Omega) \oplus \R^M)/ \R \simeq H^1(\Omega) \oplus \R^M_\diamond$.

\section{cFEM applied to the CEM}
\label{sec:cFEM}
In this section we describe how the parameter-dependent CEM forward problem is discretized in both spatial and parametric dimensions.
Let $\mathcal{I} \in \R^{M \times (M-1)}$ denote a current matrix whose columns form an arbitrary but fixed basis for the space of feasible net current feeds, that is, for $\R^M_\diamond$.
The corresponding numerical solutions $\mathcal{U}^y \in \R^{M \times (M-1)}$ for the {\em electrode} potentials in problem \eqref{pCEM} (or in its variational formulation \eqref{pCEMvar}) with a fixed $y \in \Upsilon$ can be computed by using standard FE techniques; recall that $y = (y_\sigma, y_\gamma, y_E, y_z)$ defines the conductivity, the contact resistances and the geometric set-up for the forward problem~\eqref{pCEM}.
We continue to assume that the ground level of potential is chosen such that each column of $\mathcal{U}^y$ has zero mean.
It is straightforward to show that, due to the linear dependence on the current pattern $I$ in \eqref{pCEMvar}, the solutions $\widetilde{\mathcal{U}}^y$ corresponding to another current matrix $\widetilde{\mathcal{I}}$ satisfy $\widetilde{\mathcal{U}}^y = \mathcal{U}^y \widetilde{\mathcal{I}}^\dagger \mathcal{I}$, where $(\cdot)^\dagger$ denotes the Moore--Penrose pseudoinverse.
Thus, we pay no attention to choosing the current feeds in what follows.

By requiring the variational formulation \eqref{pCEMvar} to hold for all $M-1$ current patterns and for all FEM basis functions, we end up with a matrix equation
\begin{equation}
\label{pCEMmat}
A^y v^y = F^y,
\end{equation}
where the stiffness matrix $A^y$ depends on all parameters $y \in \Upsilon$ and $F^y$, having $M-1$ columns, depends only on the subvectors $y_\gamma$ and $y_E$ (through the meshing of $D^y$).
Ultimately, we are interested only in those $Q := M(M-1)$ elements of the unknown matrix $v^y$ that define the electrode potentials $\mathcal{U}^y$.

The aim of the cFEM  is to construct an explicit parameter dependence into the numerical solution.
More precisely, we seek for a polynomial mapping
$$
\Upsilon \ni y \mapsto \mathcal{U}(y) \in \R^Q
$$
so that $\mathcal{U}(y) \approx \mathcal{U}^y$ element-wise.
For notational convenience, we actually consider $\mathcal{U}(y)$ as a vector that is obtained by stacking the matrix columns on top of each other.
We write the numerical parametric solution in the form
\begin{equation}
\label{paramU}
[\mathcal{U}(y)]_q = \sum_{p=1}^P \widehat{U}_{q,p} \mathscr{L}_p(y), \qquad q = 1, \ldots, Q,
\end{equation}
where $\widehat{U}_{q,p} \in \R$ and $\mathscr{L}_p$ are suitably scaled Legendre polynomials in $N$ variables (cf.~\cite{Gautschi04}).
It remains to choose the actual set of polynomials and also to determine the coefficient matrix $\widehat{U} \in \R^{Q \times P}$.

We henceforth assume that the polynomial basis is normalized as
\[
\int_{\Upsilon}\mathscr{L}_p(y)\mathscr{L}_{p'}(y)\,{\rm d}y=\delta_{p,p'},\qquad p,p' = 1, \dots, P,
\]
where $\delta_{p,p'}$ denotes the Kronecker symbol defined to be unity whenever its indices coincide and vanishing otherwise. Performing a discrete projection of the parametric solution \eqref{paramU} onto the tensorized Legendre polynomial basis $(\mathscr{L}_p)_{p=1}^P$ in $\Upsilon$ yields a representation of the coefficients given by the integrals
\begin{equation*}
\widehat U_{q,p}=\int_{\Upsilon}[\mathcal{U}(y)]_q\mathscr{L}_p(y)\,{\rm d}y,\qquad q=1, \dots ,Q,  \ \  p=1, \dots ,P,
\end{equation*}
which we approximate by a sparse grid quadrature based on nested Clenshaw--Curtis rules. The sparse grid method was first introduced in \cite{Smolyak63} and comprehensive analyses of its approximation properties were developed later in \cite{NovRit96,NovRit99,WasilWoz95}. The application of sparse grid quadratures to collocation methods was pioneered in the context of parametric partial differential equations in such works as \cite{Babuska10,Xiu06}. In the following, we give a brief overview of these techniques applied to the problem considered in this paper.

The sparse grid method is based on extending a family of univariate quadrature rules into the high-dimensional parametric region $\Upsilon\subset\mathbb{R}^N$ by considering a sparsity-promoting linear combination of tensorized collections of univariate quadrature operators. The nested Clenshaw--Curtis rules in the interval $[-1/2,1/2]$ are based on the sequence $m(1)=1$ and $m(n)=2^{n-1}+1$ for $n>1$, which corresponds to the abscissae $y_1^{(1)}=0$ and
\[
y_k^{(n)}=-\frac{1}{2}\cos\left(\frac{(k-1)\pi}{m(n)-1}\right),\qquad k=1,\dots,m(n),\ n\in\mathbb{N}\setminus\{1\}.
\]
The abscissae characterize a sequence of positive weights $(w_k^{(n)})_{k=1}^{m(n)}$ that define the Clenshaw--Curtis quadrature rules
\[
Q_nf=\sum_{k=1}^{m(n)}w_k^{(n)}f(y_k^{(n)})\approx\int_{-1/2}^{1/2}f(y)\,{\rm d}y,
\]
which are exact for all polynomials of degree not more than $m(n)$. The tensor products of these quadrature operators are defined as
\[
\bigotimes_{k=1}^NQ_{\alpha_k}f=\sum_{i_1=1}^{m(\alpha_1)}\cdots\sum_{i_N=1}^{m(\alpha_N)}w_{i_1}^{(\alpha_1)}\cdots w_{i_N}^{(\alpha_N)}f(y_{i_1}^{(\alpha_1)},\dots,y_{i_N}^{(\alpha_N)}),
\]
where $\alpha_k\in\mathbb{N}$ for $1\leq k\leq N$.

A particular case of sparse grid quadrature is the well known Smolyak's construction \cite{WasilWoz95}. The $N$-dimensional Smolyak rule of order $K\geq0$ based on the nested Clenshaw--Curtis rules is given by
\[
\mathcal{Q}_{N,K}=\sum_{\max\{N,K+1\}\leq |\alpha|\leq N+K}(-1)^{N+K-|\alpha|}\binom{N-1}{N+K-|\alpha|}\bigotimes_{k=1}^NQ_{\alpha_k},
\]
where $\alpha=(\alpha_1,\dots,\alpha_N)\in\N^N$ and $|\alpha|=\alpha_1+\ldots+\alpha_N$. The function evaluations are carried out in the \emph{sparse grid}
\[
\Theta_{N,K}=\bigcup_{|\alpha|=N+K}\Theta_{\alpha_1}\times\cdots\times\Theta_{\alpha_N},
\]
where $\Theta_n=\{y_k^{(n)}\}_{k=1}^{m(n)}$. The sparse grid $\Theta_{N,K}$ has the asymptotic cardinality 
$$
n_{N,K} := \, \#\Theta_{N,K}\sim\frac{2^K}{K!}N^K
$$ 
as $N$ tends to infinity for a fixed $K$ \cite{NovRit99}. By tabulating the collocation nodes $(y_k^{(N,K)})_{k=1}^{n_{N,K}}$ in $\Theta_{N,K}$ and their respective weights $(w_k^{(N,K)})_{k=1}^{n_{N,K}}$, the Smolyak quadrature rule can be rewritten as a cubature rule
\[
\mathcal{Q}_{N,K}f=\sum_{k=1}^{n_{N,K}}w_k^{(N,K)}f(y_k^{(N,K)}).
\]
The Smolyak rule generalizes the polynomial exactness of the underlying univariate rules~\cite{NovRit96}. Let $\Pi_K^N$ denote the space of all polynomials in $N$ variables of total degree at most $K$. Then
$$
\mathcal{Q}_{N,K}f=\int_{\Upsilon}f(y)\,{\rm d}y
$$
for all multivariate polynomials $f$~such that
\[
f\in\sum_{|\alpha|=N+K}(\Pi_{m(\alpha_1)}^1\otimes\cdots\otimes\Pi_{m(\alpha_N)}^1).
\]
In particular, it can be shown that the rule is exact for all $f\in\Pi_{2K+1}^N$ whenever $K<3N$~\cite{NovRit99}. For $K\geq 3N$, the related total degree space is different and we omit it.

The coefficients $\widehat U_{q,p}\in\mathbb{R}$ that appear in the operator \eqref{paramU} can now be approximated by using the cubature rule 
\begin{equation}
\label{smolyakcubature}
\widehat U_{q,p} \approx \mathcal{Q}_{N,K}([\mathcal{U}(\,\cdot \,)]_q\mathscr{L}_p)=\sum_{k=1}^{n_{N,K}}w_k^{(N,K)}[\mathcal{U}(y_k^{(N,K)})]_q\mathscr{L}_p(y_k^{(N,K)}),
\end{equation}
where the needed nodal evaluations of $\mathcal{U}(\,\cdot \,)$ are replaced by those of the FEM solutions $\mathcal{U}^{(\cdot)}$.
The error accumulation of the collocated solution depends on the error introduced in the numerical solution of the CEM for fixed realizations of the parameter $y\in\Upsilon$, the truncation error that stems from the representation \eqref{paramU} and the aliasing error caused by the cubature rule in \eqref{smolyakcubature}.

\section{Implementation}
\label{sec:algo}
As emphasized in,~e.g.,~\cite{Hyvonen15}, an EIT inversion algorithm based on stochastic or parametric FEM consists of two distinct parts.
In the \emph{pre-measurement} processing, the explicit parameter dependence \eqref{paramU} is constructed by using cFEM.
Unlike in \cite{Hyvonen15}, we do not assume that the geometry of the measurement setting is known during the pre-measurement processing, but instead include parameters for the boundary curve and electrode positions in the cFEM problem.

In the \emph{post-measurement} processing, the parametric solution is fitted to the measurement data with respect to the parameter vector $y$.
This part is often treated as a least squares minimization problem, which involves either a Tikhonov functional or a {\em maximum a posteriori} (MAP) estimator in a Bayesian approach.
Once a minimizing vector $y \in \Upsilon$ is found, recovering the quantities of interests follows straightforwardly by considering the mappings introduced in the pre-measurement step.

\subsection{Pre-measurement processing}
\label{sec:premeas}
This subsection introduces one possible set of concrete mappings that were abstractly given in Section~\ref{sec:parametr}.
That is, we consider functions that map the parameter vectors $y_\gamma$, $y_E$, $y_\sigma$ and $y_z$ to boundary curve, electrode positions, conductivity field and contact resistances, respectively.
We will frequently use the fact that the components of the parameter vectors lie in the interval $[-1/2, 1/2]$.
The mappings introduced here are certainly not the only feasible ones.

The boundary curve $\Gamma(y_\gamma)$ is represented as a perturbed circle, where the amount of perturbation is determined by a linear combination of $N_\gamma \geq 3$ quadratic B-splines (cf.,~e.g.,~\cite{Hollig03}).
To this end, we choose $L = 2 \pi$ in \eqref{para_bound} and write
$$
\gamma(\phi,y_\gamma) = \big (r(\phi,y_\gamma), \phi \big)
$$
in polar coordinates.
We then choose the maximum radial perturbations
$$
\rho_{-} = \min_{\phi \in [0,2\pi]} \min_{y_\gamma \in \Upsilon_\gamma} r(\phi,y_\gamma), \qquad
\rho_{+} = \max_{\phi \in [0,2\pi]} \max_{y_\gamma \in \Upsilon_\gamma} r(\phi,y_\gamma)
$$
and set $\rho_0 = (\rho_- + \rho_+)/2$.
The radial coordinate for $\gamma$ can now be written as
\begin{equation}
\label{radial_coor}
r(\phi, y_\gamma) = \rho_0 + \sum_{i=1}^{N_\gamma} (\rho_+ - \rho_-) [y_\gamma]_i \psi_i(\phi),
\end{equation}
where $\psi_i \in C^1_{2 \pi}(\R)$ are nonnegative and uniform quadratic B-splines that form a partition of unity.
The unperturbed case $y_\gamma = 0$ corresponds to a circle with radius $\rho_0$, i.e., $D(0)$ is a disk of radius $\rho_0$.
Each spline satisfies $\lvert \mathrm{supp}(\psi_i) \cap [0,2 \pi] \rvert = 6 \pi / N_\gamma$.
Thus, the deformations are local, as illustrated in Figure~\ref{fig:domains}.
We define the homeomorphism \eqref{homeo} as
\begin{equation}
\label{homeodef}
\Phi \big((r',\phi), y_\gamma \big) = \bigg(\frac{\rho_0}{r(\phi,y_\gamma)} r', \phi \bigg),
\end{equation}
which holds whenever $(r', \phi) \in D(y_\gamma)$.

For all $y_E \in \Upsilon_E$, we define the starting angles of the electrodes as
\begin{equation}
\label{start_angle}
\theta_m(y_E) = (m-1) \frac{2 \pi}{M} + 2 \alpha [y_E]_{m}, \qquad m = 1, \ldots, M.
\end{equation}
Here, the offset parameter $\alpha \geq 0$ is sufficiently small so that the non-overlapping condition \eqref{nooverlap} is satisfied.
(Actually, the existence of such $\alpha$ also requires that $\rho_-$ is sufficiently large compared to $\omega$,~i.e.,~$2\pi \rho_- > M\omega$, but this is assumed to be true.)
Because in EIT the absolute orientation of the imaged object in space cannot be determined, we can as well fix one of the starting angles and decrease the number of parameters by one.
For simplicity, however, we keep $N$ as defined and fix the starting angle of the first electrode by (re-)defining $\theta_1(y_E) = 0$ for all $y_E \in \Upsilon_E$.

\begin{figure}
\center{
{\includegraphics[height=1.8in]{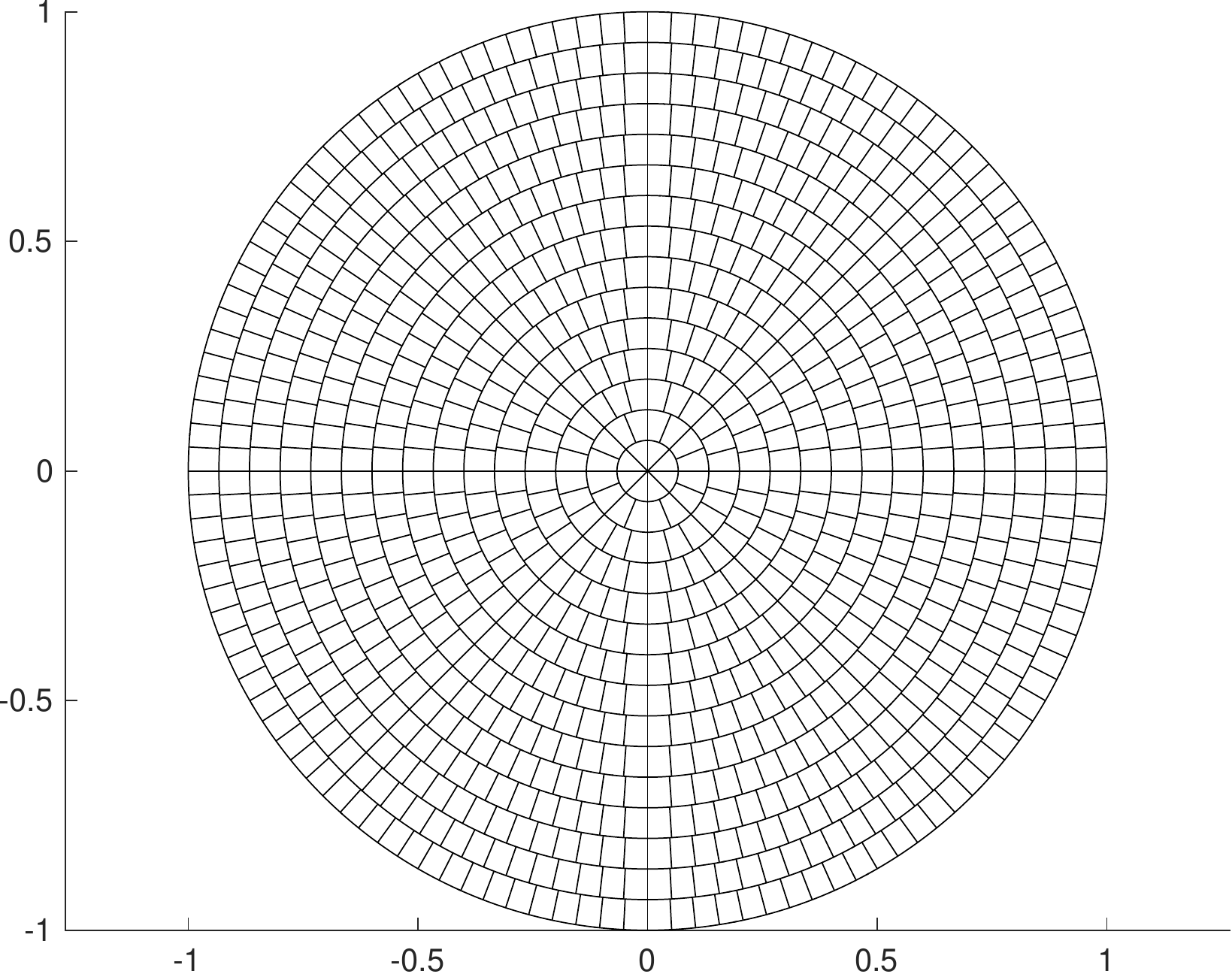}}
\qquad
{\includegraphics[height=1.8in]{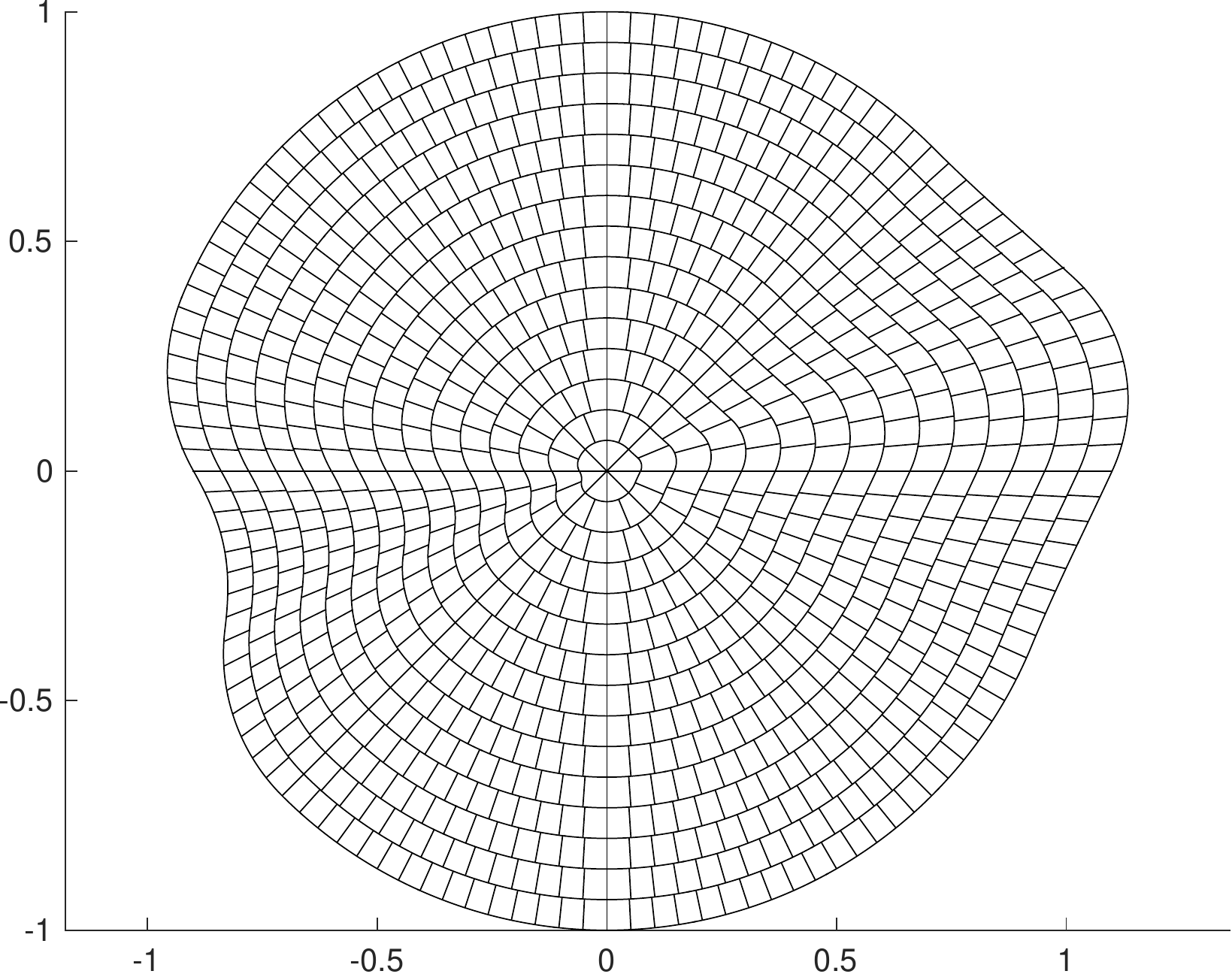}}
}
\caption{The discretization of the conductivity in a disk (left) and in a domain that is obtained by maximally perturbing 2 out of $N_\gamma = 16$ spline coefficients defining the boundary curve (right). In these pictures $N_\sigma = 960$, $\rho_-=0.8$ and $\rho_+=1.2$.}
\label{fig:domains}
\end{figure}

We resort to a piecewise constant representation for the conductivity. 
Other possible choices include Karhunen--Lo\`eve eigenfunctions corresponding to a (prior) random field and suitable FEM basis functions.
We partition the canonical domain $D(0)$ into $N_\sigma$ pairwise disjoint subdomains that satisfy
$$
\bigcup_{i=1}^{N_\sigma} \overline{\chi_i^{-1}(1)} = \overline{D(0)},
$$
where $\chi_i$ is the indicator function of the $i$th subdomain.
An example partitioning is shown in Figure~\ref{fig:domains}.
For $x \in D(0)$, the canonical conductivity $\sigma_0$ is defined as
\begin{equation}
\label{parsigma}
\sigma_0(x,y_\sigma) = \sum_{i=1}^{N_\sigma} \chi_i(x) \exp \mathopen{}\bigg( \frac{1}{2} \log(\varsigma_- \varsigma_+) + \log\bigg(\frac{\varsigma_+}{\varsigma_-}\bigg) [y_\sigma]_i \bigg)\mathclose{}
\end{equation}
for some \emph{given} values $0 < \varsigma_- \leq \varsigma_+$.
It is easy to see that \eqref{parsigma} satisfies \eqref{sigmaminmax}.
The domain-dependent parametrization of the conductivity follows from \eqref{para_cond} and \eqref{homeodef}.
The contact resistances are defined according to
\begin{equation}
\label{parz}
z_m(y_z) = \exp \mathopen{}\bigg( \frac{1}{2} \log(\zeta_- \zeta_+) + \log \mathopen{}\bigg(\frac{\zeta_+}{\zeta_-}\bigg) [y_z]_m \mathclose{}\bigg)\mathclose{}, \qquad m = 1, \ldots, M,
\end{equation}
which agrees with \eqref{zminmax}.

Typically, the number of parameters defining the conductivity field is much higher than the number of boundary curve parameters.
In Figure \ref{fig:domains}, for example, we have chosen $N_\sigma = 960$, whereas $N_\gamma = 16$.
On the other hand, the number of parameters defining the contact resistances and electrode positions is directly determined by the number of electrodes, which is usually quite low.
Due to the structure of a typical sparse collocation grid, the number of different computational domains and finite element meshes is therefore moderate compared to the total number of collocation points.
In fact, most of the forward problems in our numerical examples are computed in the canonical domain $D(0)$ with equiangled electrodes and with contact resistances set to $z(0)$.
These problems merely correspond to perturbing a few values in the stiffness matrix $A^y$ in \eqref{pCEMmat}.

Besides being easily parallelizable, there may also be a lot of symmetries that can be used to reduce the amount of computation in the pre-measurement phase.
For example, if the conductivity is discretized as in Figure~\ref{fig:domains}, the number of electrodes is divisible by eight and the current matrix involves certain symmetries, then solving a forward problem corresponding to a conductivity node (i.e., a collocation node where only $y_\sigma$ contains nonzero values) simultaneously solves several problems where both current feed and the conductivity field are rotated.
Only re-ordering of the resulting potential values is required.
Similar symmetries may arise, e.g., when the boundary curve is perturbed but other components are fixed.
Exploiting these symmetries is beyond the scope of this article.

\subsection{Post-measurement processing}
\label{sec:postmeas}
The aim of the post-measurement processing is to fit the parametric solution \eqref{paramU} to measurement data; once again, recall that $y \in \Upsilon$ appearing in \eqref{paramU} parametrizes the conductivity, the contact resistances and the geometric set-up for \eqref{pCEM}.
Let $\mathcal{V} \in \R^Q$ denote a vector of noisy potential measurements corresponding to some known set of current feeds.
As mentioned in the beginning of Section \ref{sec:cFEM}, the actual current values can be arbitrary as long as they are known and they form a proper basis.
Moreover, the linearity with respect to the applied current pattern implies that uncertainties in the current feeds can be assumed to be propagated to the measurement errors of potentials.
In this paper, we tackle the inverse problem of EIT by considering a nonlinear least squares problem of the form
\begin{equation}
\label{leastsquares}
\min_{y \in \Upsilon} \big\{ \lvert \mathcal{U}(y) - \mathcal{V} \rvert^2 + \lambda^2 \lvert R(y) \rvert^2 \big\},
\end{equation}
where $\lambda \geq 0$ is a regularization parameter and $R \colon \R^N \to \R^{N'}$ is a differentiable regularization operator for an arbitrary $N' \in \N$. Notice that the connection between a (local) minimizer of \eqref{leastsquares} and (regularized) solutions of the underlying undiscretized inverse problem of EIT in the framework of the CEM is nontrivial to analyze; see,~e.g.,~\cite{Hiptmair15,Schwab12} for related considerations. 

We refer to \cite{Nocedal99} for discussion about nonlinear least squares algorithms.
In short, most algorithms are based on successive linearizations and require evaluating both $\mathcal{U}(y)$ and $R(y)$ as well as their Jacobian matrices for different values of $y \in \Upsilon$.
The reconstructions in Section~\ref{sec:numerics} are obtained by using the \texttt{lsqnonlin} function in MATLAB with a user-supplied Jacobian.
It is shown in \cite{Mustonen15} that the cost of evaluating $\mathcal{U}(y)$ and its Jacobian is $O(QN^k)$, where $k$ is the largest polynomial (total) degree in the chosen $P$-dimensional polynomial subspace.
On the other hand, solving a linearized least squares subproblem typically has a complexity of $O(QN^2)$, assuming that $N'\lesssim N$. 

As the regularization operator we use a block-diagonal matrix $R \in \R^{N \times N}$ containing blocks $R_\sigma \in \R^{N_\sigma \times N_\sigma}$, $R_\gamma \in \R^{N_\gamma \times N_\gamma}$, $R_E \in \R^{M \times M}$ and $R_z \in \R^{M \times M}$.
The conductivity block $R_\sigma$ is defined via its inverse Cholesky factor, that is,
$$
[R_\sigma^{-\mathrm{T}} R_\sigma^{-1}]_{i,j} = \kappa_{\sigma}^2 \exp \mathopen{}\bigg( - \frac{\lvert x_i - x_j \rvert^2}{2 \beta^2} \bigg) + \varepsilon \delta_{i,j}, 
\qquad i,j = 1, \dots, N_\sigma,
$$
where $\kappa_{\sigma}, \beta, \varepsilon > 0$ are free parameters to be specified by the operator of the algorithm and $x_i \in D(0)$ is the polar mean of the $i$th subdomain of $D(0)$,~i.e.,~the point defined by the mean values of the polar coordinates in that subdomain.
Loosely speaking, this corresponds to the assumption that the conductivity is {\em a priori} a log-normal random field with variance-like parameter $\kappa_{\sigma}^2$ and correlation length $\beta$;  the role of the {\em small} parameter $\varepsilon$ is just to guarantee the invertibility of the matrix.
We could as well write $R_\sigma = R_\sigma(y_\gamma)$ and compute the distances in perturbed domains, but this would cause extra work with insignificant effect on the reconstruction.

The (Cholesky factors of the) regularization matrices $R_\gamma$, $R_E$ and $R_z$ are diagonal, i.e., 
$$
R_\gamma = \kappa_{\gamma}^{-1} \mathrm{I}, \qquad R_E = \kappa_{E}^{-1} \mathrm{I}, \qquad R_z = \kappa_{z}^{-1} \mathrm{I},
$$
where $\mathrm{I}$ denotes an identity matrix of the appropriate size and $\kappa_{\gamma}, \kappa_{E}, \kappa_{z} > 0$ are regularization parameters. If \eqref{leastsquares} were considered as computation of a MAP estimate within the Bayesian paradigm,  the positive parameters $\kappa_{\gamma}, \kappa_{E}$ and $\kappa_{z}$ would act as the standard deviations of the (independent) zero-mean Gaussian priors for the components of $y_\gamma$, $y_E$ and $y_z$, respectively (cf.,~e.g.,~\cite{Kaipio04}). In particular, under the Bayesian interpretation, the prior for the contact resistances is log-normal (cf.~\eqref{parz}) and those for the electrode angles and the coefficients of the spline-like boundary perturbations Gaussian.

\section{Numerical experiments}
\label{sec:numerics}
We demonstrate the feasibility of the proposed method by numerical experiments in two spatial dimensions. First, the parametric solution $\mathcal{U}(y)$ is constructed as explained in Sections \ref{sec:cFEM} and \ref{sec:premeas}. The conductivity is discretized with $N_\sigma = 960$ parameters as in Figure~\ref{fig:domains}. For the boundary curve, we choose $N_\gamma = 16$ splines and the number of electrodes is $M = 16$. Thus, the total number of parameters is $N = 1008$. By using the notation of Section~\ref{sec:premeas}, we choose the minimum and maximum radii as $(\rho_-, \rho_+) = (15,20)$. The maximum offset for an electrode angle is $\alpha = 0.1$ and the width of the electrodes is $\omega = 2$.  For the conductivity and the contact resistances we choose $(\varsigma_-, \varsigma_+) = (0.1, 1)$ and $(\zeta_-, \zeta_+) = (0.05, 1)$, respectively. (In the tests based on experimental data, the units of length, conductivity and contact resistance are cm, ${\rm mS}/{\rm cm}$ and ${\rm k} \Omega \, {\rm cm}^2$, respectively.)

The tensorized Legendre polynomial basis $(\mathscr{L}_p)_{p=1}^P$ is chosen such that it spans the space containing all bilinear, linear and constant polynomials in $N$ variables.
This results in $P = (N^2+N)/2+1 = 508\,537$ and the complexity of evaluating $\mathcal{U}(y)$ and its Jacobian matrix becomes $O(QN^2)$.
The Smolyak rule of order $K=2$ based on the nested Clenshaw--Curtis rules is used in the computation of the coefficients $\widehat U_{q,p}$. This rule is exact for integrands in $\Pi_5^N$, i.e., for all $N$-variate polynomials of total degree at most $5$, resulting in $2\,034\,145$ collocation nodes in~\eqref{smolyakcubature}. The corresponding CEM forward problems are solved by a standard FEM with about $2000$ piecewise linear basis functions and appropriate refinement of the employed meshes close to  the electrodes. Recall that these forward problems can be solved in parallel. With our hardware (53GB RAM, Intel Xeon X5650 CPU) and non-optimized MATLAB implementation, this pre-measurement phase of forming \eqref{paramU} took a few hours. In what follows, we employ the same parametric forward solution in all reconstructions, except for the fixed-geometry reconstructions (see Figures~\ref{fig:naive} and \ref{fig:tank2naive}), which for comparison are computed by setting $\rho_- = \rho_+$ and $\alpha=0$ in the pre-measurement phase.

As mentioned in Section~\ref{sec:postmeas}, the inverse problem is also solved with MATLAB. We actually treat the problem \eqref{leastsquares} as an \emph{unconstrained} minimization problem and solve it by {\tt lsqnonlin} with zero initial vector\footnote{This initial guess corresponds to a disk of radius $\rho_0=17.5$, equiangled electrodes, a homogeneous conductivity $\sigma \equiv \sqrt{\varsigma_- \varsigma_+} \approx 0.32$ and contact resistances $z_m = \sqrt{\zeta_- \zeta_+} \approx 0.22$, $m=1,\dots, 16$.} and the {\tt levenberg-marquardt} option, because this is simple, easily reproducible and there is no reason to expect that some other technique would result in a significantly more accurate localization of a minimizer for \eqref{leastsquares}. Apart from $\lambda > 0$, we use the same values for the free parameters of the post-measurement processing in all numerical tests, namely $\beta = 4$, $\varepsilon = 10^{-4}$ and $\kappa_{\sigma} = \kappa_\gamma = \kappa_E = \kappa_z = 0.25$ (cf.~Section \ref{sec:postmeas}). Making these parameters case-specific would certainly improve the reconstructions to a certain extent, but it would also conflict our aim of demonstrating that a generic set of parameter values leads to good reconstructions both with simulated data and for different experimental settings. Recall that within the Bayesian paradigm $\kappa_{\sigma}$,  $\kappa_\gamma$, $\kappa_E$ and $\kappa_z$ can be interpreted as the prior standard deviations for the components of $y_{\sigma}$, $y_\gamma$, $y_E$ and $y_z$, respectively, meaning that the sizes of the former indicate the amount of variation one expects in the latter {\em a priori}. Combining this observation with the parametrizations \eqref{radial_coor} and \eqref{start_angle}--\eqref{parz} indicates how much fluctuation is expected in the boundary curve, the electrode positions, the pixel values of the conductivity and the contact resistances, respectively, prior to the measurements. Moreover, the choice of the correlation length $\beta>0$ is related to the anticipated characteristic length of conductivity variations inside the imaged object.

Unless otherwise stated, the post-measurement processing phase lasted only a few seconds on a modern desktop computer.

\subsection{Simulated data}

\begin{figure}
\center{
{\includegraphics[height=2in]{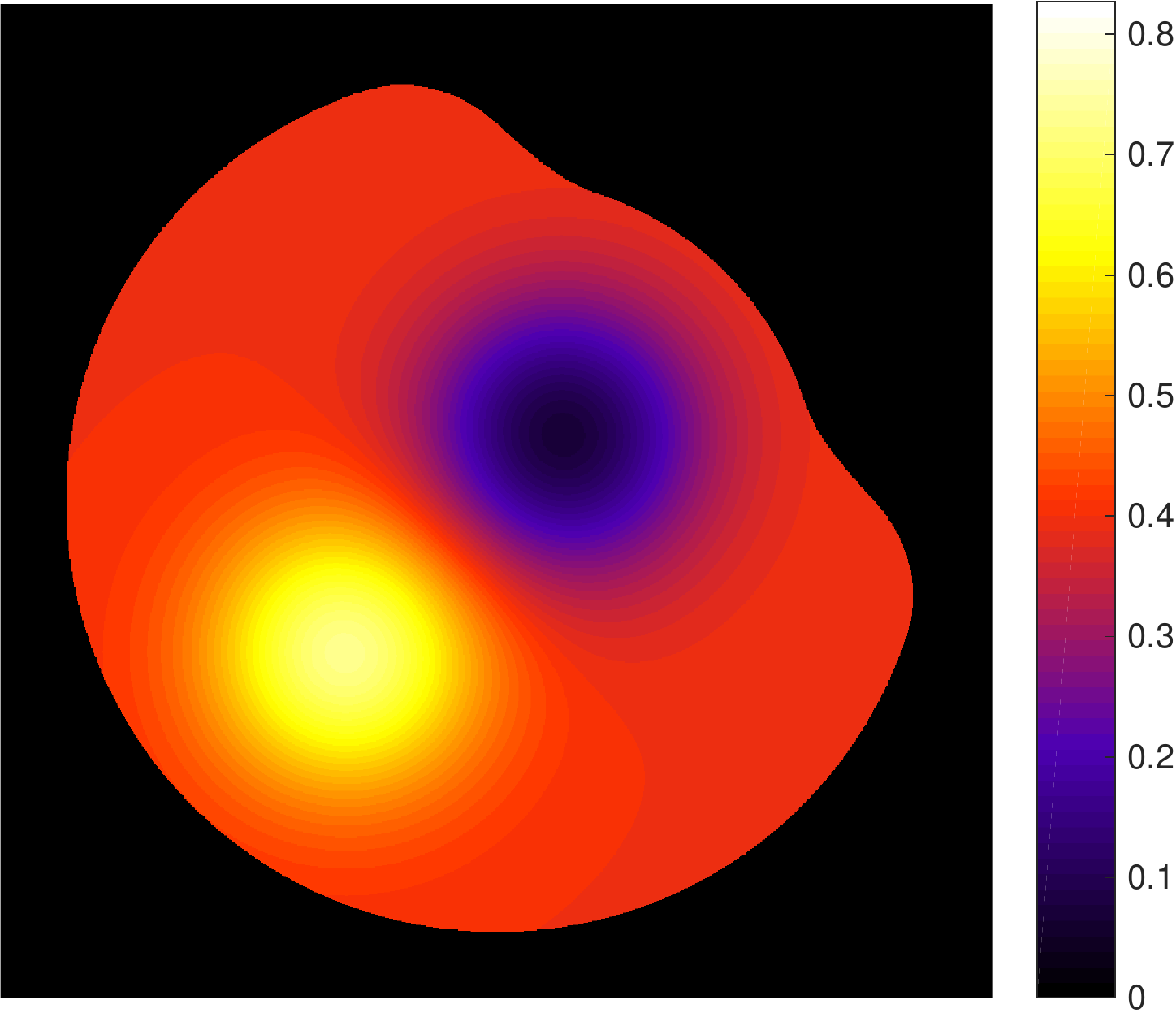}}
\qquad
{\includegraphics[height=2in]{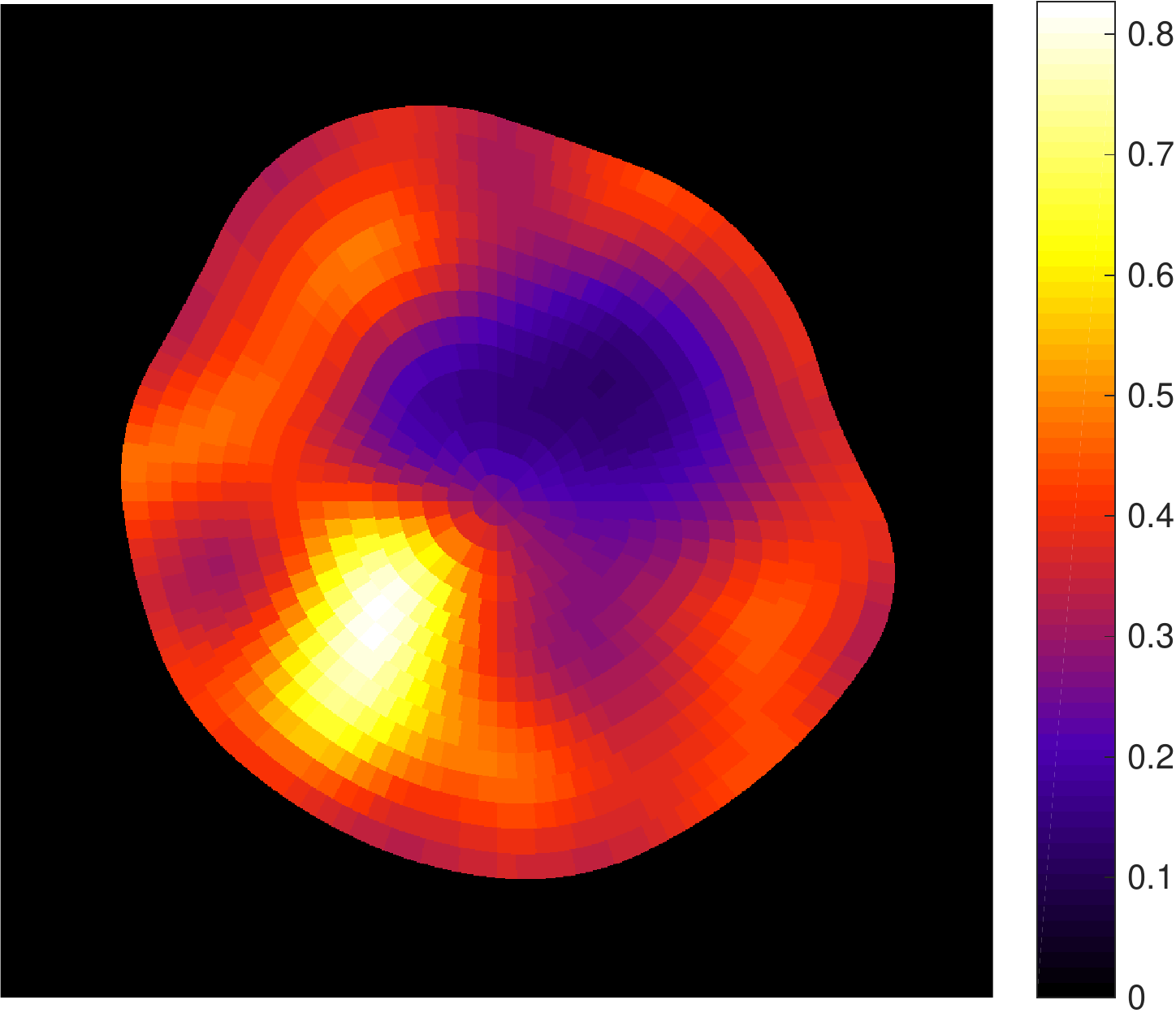}} 
} \\
\hspace{3mm}
\center{
{\includegraphics[height=2in]{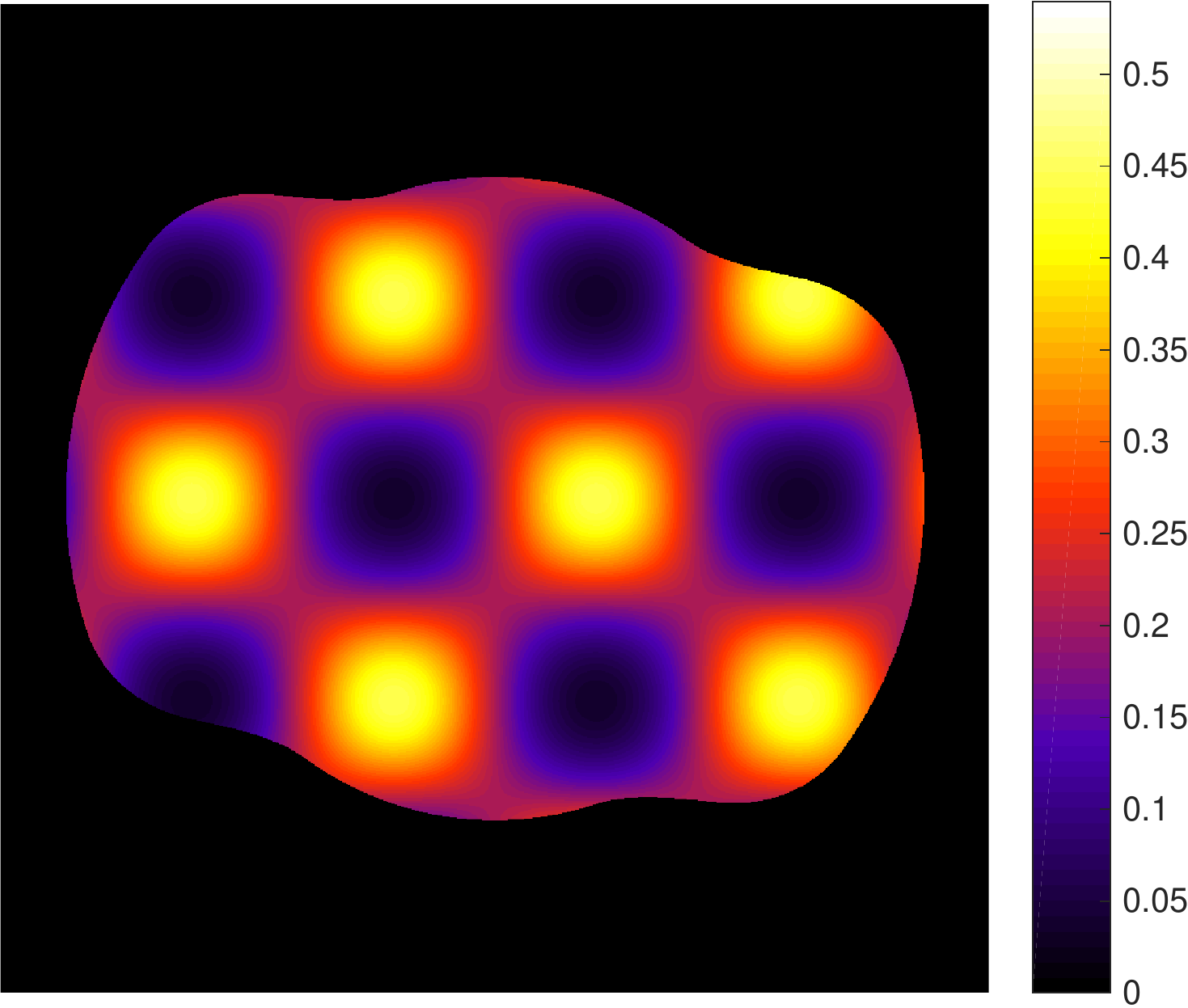}}
\qquad
{\includegraphics[height=2in]{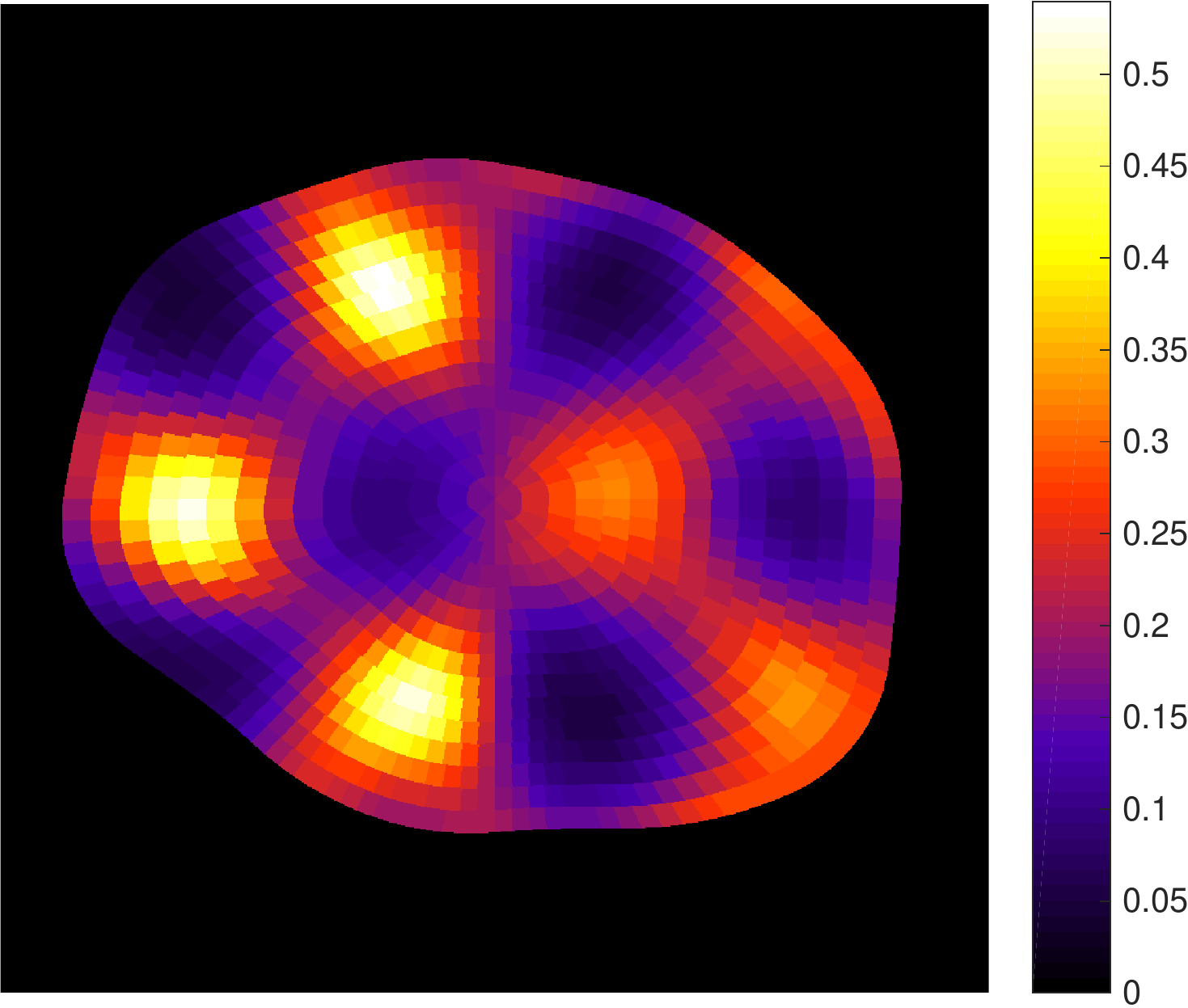}}
}
\caption{Simulated data. Left: the target configurations. Right: the reconstructions.}
\label{fig:simulated1}
\end{figure}

Let us first consider simulated measurements. The considered artificial target conductivities and body shapes are shown in the left-hand column of Figure~\ref{fig:simulated1}. Neither of the two boundary curves can be exactly represented by the parametrization \eqref{radial_coor} with $N_\gamma = 16$. For both phantoms, there are sixteen electrodes of two units width distributed somewhat evenly along the respective boundary curve. The corresponding contact resistances are similar random perturbations; see Figure~\ref{fig:simulated2} for the details. The employed current patterns,~i.e.,~the columns of $\mathcal{I}$, are
\begin{equation}
\label{current_input}
I^m =  {\mathrm e}_1 - {\mathrm e}_{m+1}  , \qquad m = 1, \dots, M-1,
\end{equation}
with ${\mathrm e}_m$ denoting the $m$th Euclidean basis vector of $\R^M$. The electrode measurements are simulated by first solving the necessary CEM forward problems by a FEM with piecewise linear basis functions --- on considerably denser meshes than the ones employed for the inverse solver --- to obtain the `exact' potential vector $\mathcal{U}^{\rm exct} \in \R^{Q}$.
Subsequently, the actual data are formed as
$$
\mathcal{V} \, = \, \mathcal{U}^{\rm exct} + \eta,
$$
where the components of $\eta \in \R^Q$ are independent realizations of a normally distributed random variable with zero mean and standard deviation
$$
\tau \, = \, 10^{-3}\! \max_{j,k=1, \dots, Q} \big| \mathcal{U}_j^{\rm exct} - \mathcal{U}_k^{\rm exct} \big|, 
$$
which corresponds to $0.1$\% of noise. 
The reconstruction algorithm is then applied to $\mathcal{V}$ with the regularization parameter $\lambda = 2\tau$. If \eqref{leastsquares} is interpreted as the determination of a MAP estimate within the Bayesian paradigm, then $\lambda$ plays the role of the standard deviation for the assumed zero mean Gaussian noise process with independent components (cf.,~e.g.,~\cite{Kaipio04}). In particular, we assume here twice as high noise level than actually contaminating the (simulated) measurements.

\begin{figure}
\center{
{\includegraphics[height=1.8in]{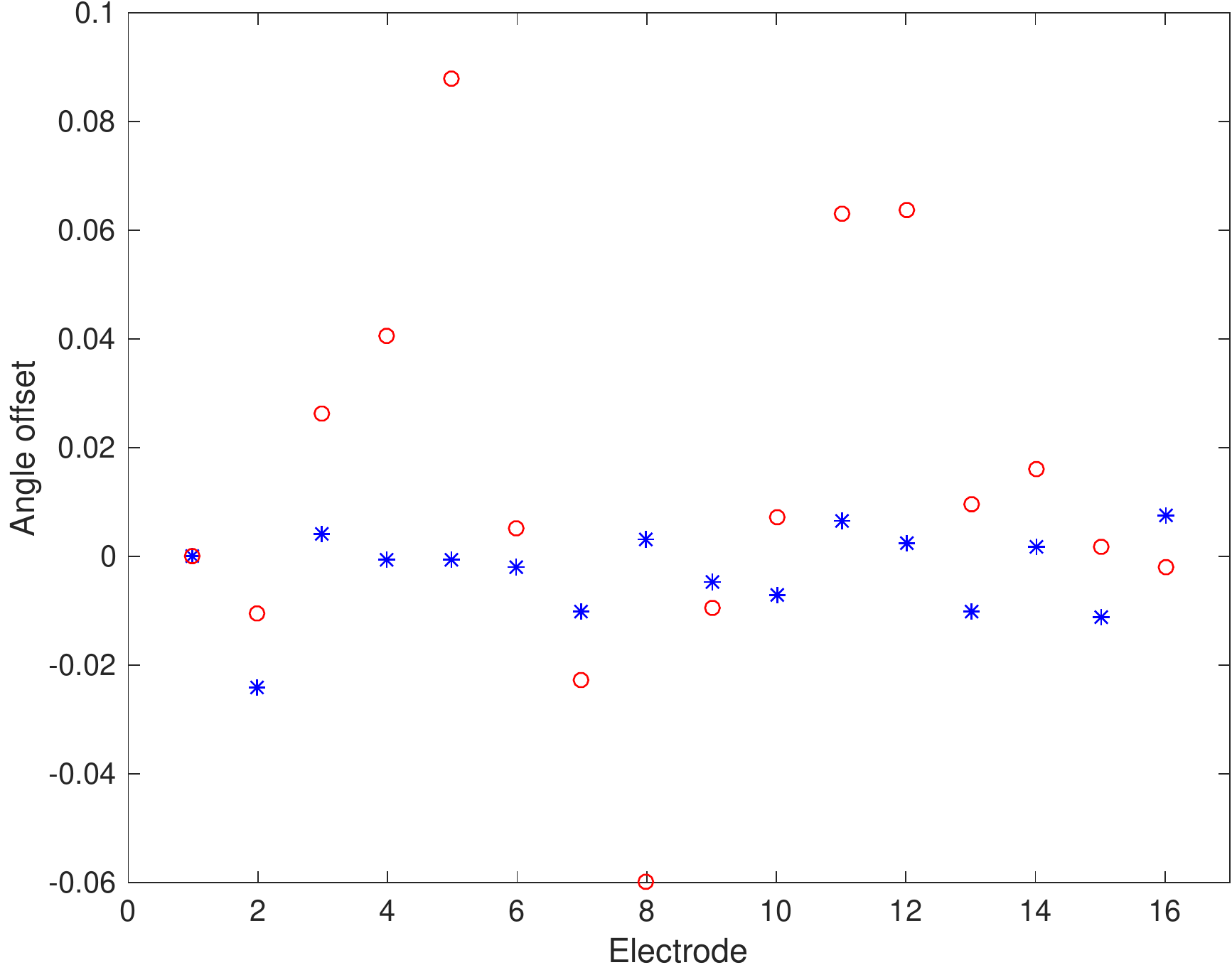}}
\qquad
{\includegraphics[height=1.8in]{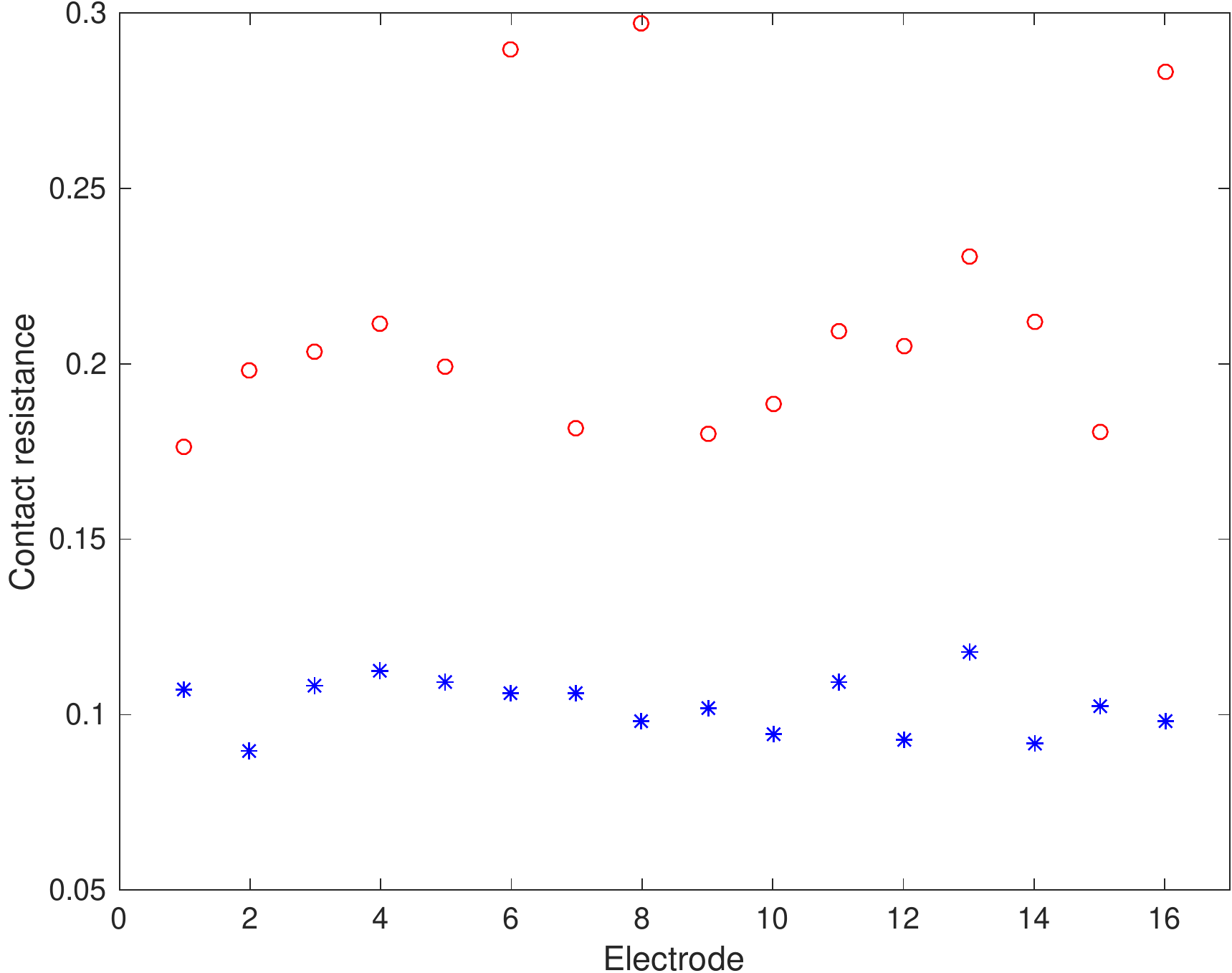}} 
} \\
\hspace{3mm}
\center{
{\includegraphics[height=1.8in]{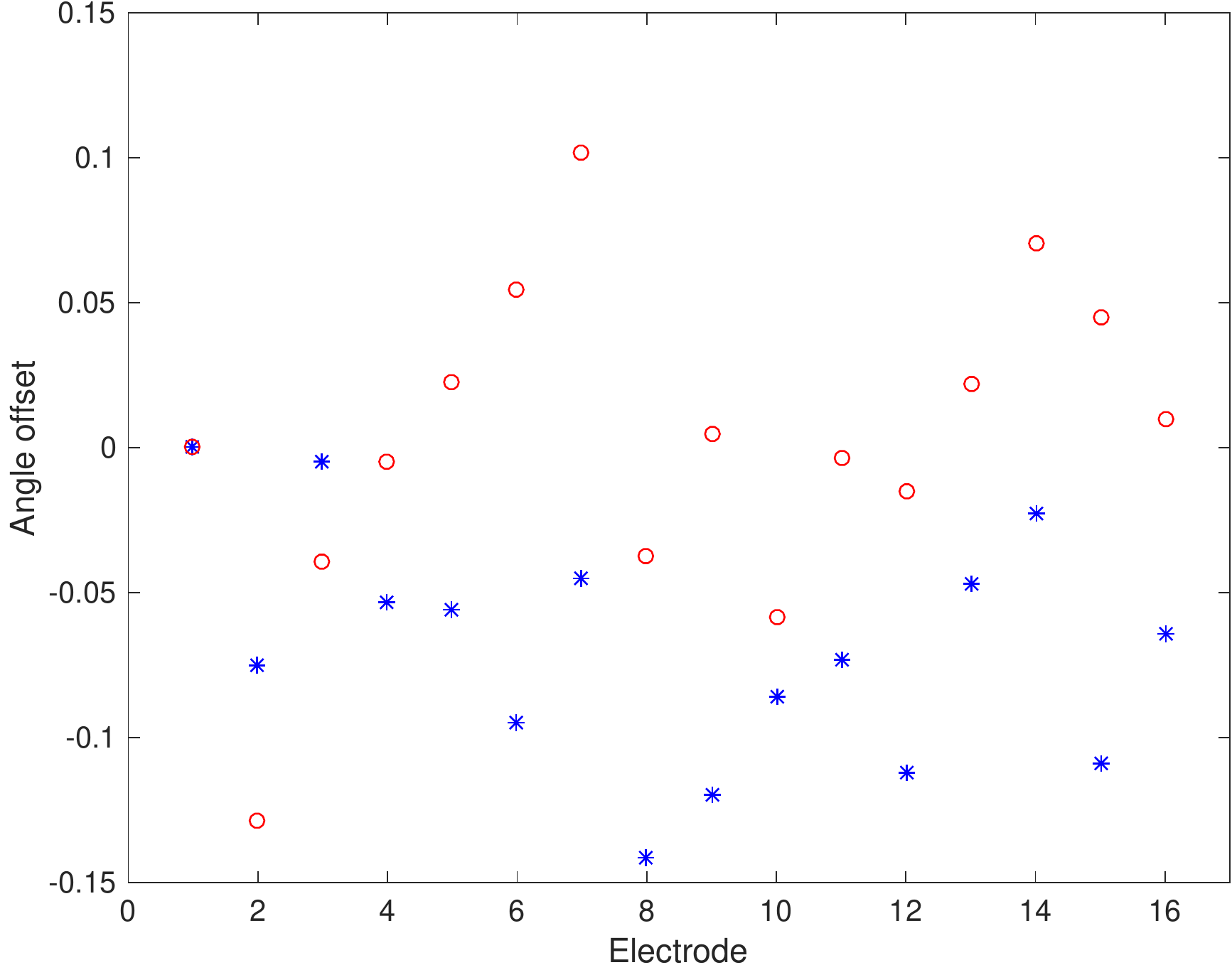}}
\qquad
{\includegraphics[height=1.8in]{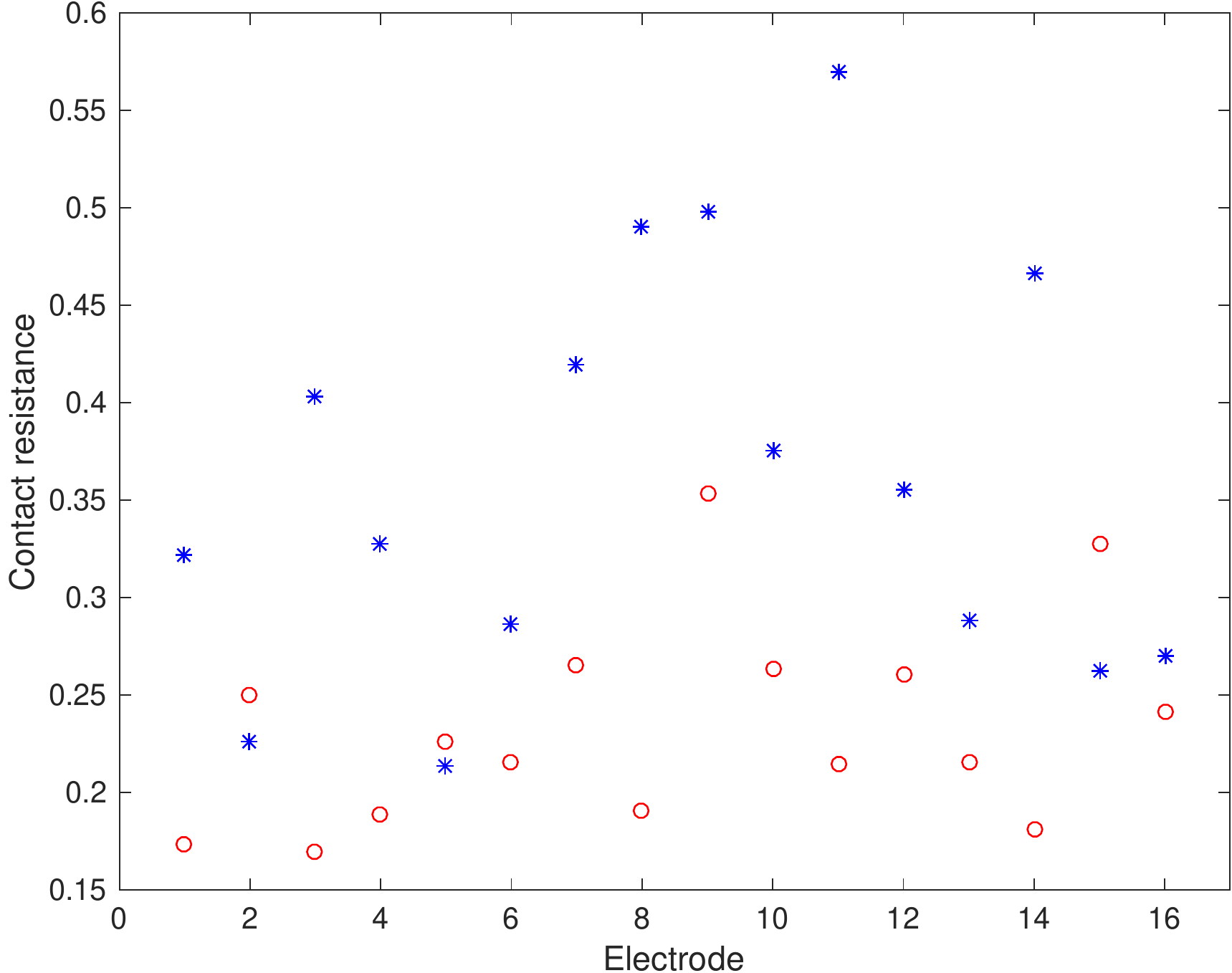}}
}
\caption{Simulated data. Left: the reconstructed electrode angles (red circles) compared with the true ones (blue asterisks). Right: the reconstructed contact resistances (red circles) compared with the true ones (blue asterisks). The top row corresponds to the top row of Figure~\ref{fig:simulated1} and the bottom row to the bottom row of Figure~\ref{fig:simulated1}}
\label{fig:simulated2}
\end{figure}

The resulting conductivity reconstructions are illustrated in the right-hand column of Figure~\ref{fig:simulated1}. They demonstrate that the algorithm is capable of capturing the qualitative behavior of the conductivity phantoms as well as the exterior boundary shapes of the examined objects.
The circumference of the top left object in Figure~\ref{fig:simulated1} is approximately $121$, while the circumference of its reconstruction is only $111$.
The corresponding values for the second object are $115$ and $113$, respectively. Thus, the reconstructed circumferences are close to that of $D(0)$, i.e., $2\pi \cdot 17.5 \approx 110$. 

The reconstructed contact resistances and electrode angles are compared with the true ones in Figure~\ref{fig:simulated2}; it is obvious that the algorithm does not estimate these quantities accurately. The inaccuracy demonstrated by Figure~\ref{fig:simulated2} is probably mainly due to the nontrivial interplay between the object shape, electrode angles and contact resistances: Some features of the data caused by the object shape may be less `expensive' to explain by varying the electrode angles and/or contact resistances under the chosen regularization/prior model. Such behavior is most evident in the top right image of Figure~\ref{fig:simulated2}, where the too high values for the reconstructed contact resistances arguably compensate for the too small size of the reconstructed object in the top right image of Figure~\ref{fig:simulated1}. It should also be noted that the orientation of the reconstruction in space is intimately connected to the reconstructed electrode angles: All (random) angle offsets for the target in the bottom left image of Figure~\ref{fig:simulated1} are negative, which leads to a reconstruction that is slightly rotated in the clockwise direction. This eliminates the systematic bias in the true electrode angles (in comparison to the parameter value $y_E = 0$) and results in reconstructed angle offsets that take both positive and negative values.

To conclude this section, let us demonstrate what happens if the uncertainties related to the measurement geometry are simply ignored. Figure~\ref{fig:naive} shows the reconstructions of the target configurations in Figure~\ref{fig:simulated1} produced by our reconstruction algorithm when the pre-measurement phase is computed in a disk of radius $\rho_0 = 17.5$ with equally spaced electrodes attached to its boundary.
As illustrated by the extremely poor reconstructions of the conductivity in Figure~\ref{fig:naive}, this naive approach is intolerable, which is inline with the findings of~\cite{Barber88, Breckon88, Darde13a, Darde13b, Kolehmainen97}.
Moreover, the minimization algorithm converges slowly: As an example, the left image in Figure~\ref{fig:naive} required $121$ function evaluations, whereas the top right image in Figure~\ref{fig:simulated1} was obtained with only $12$ function evaluations, although the same (\texttt{lsqnonlin}'s default) stopping criterion was used.

\begin{figure}
\center{
{\includegraphics[height=2in]{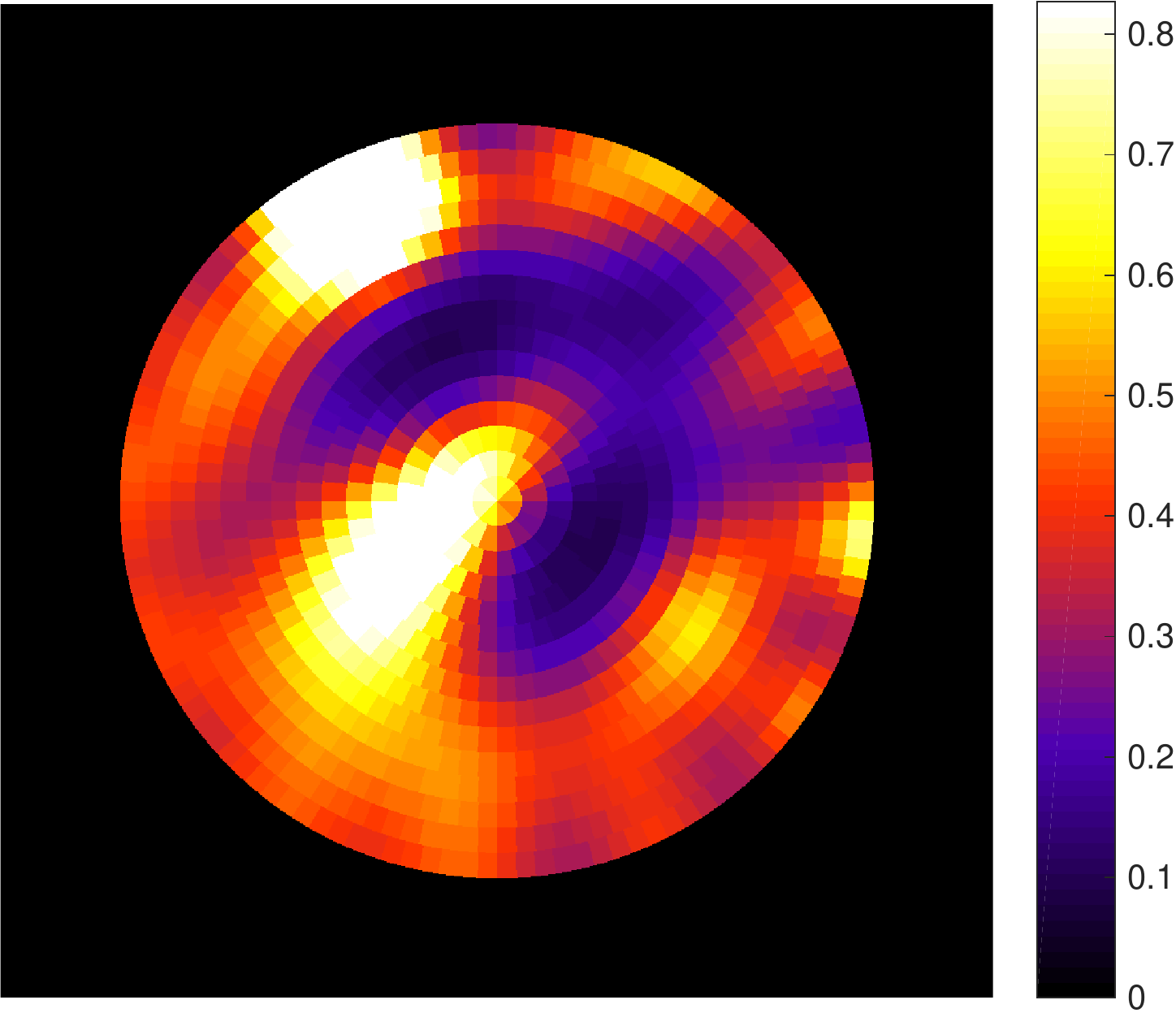}}
\qquad
{\includegraphics[height=2in]{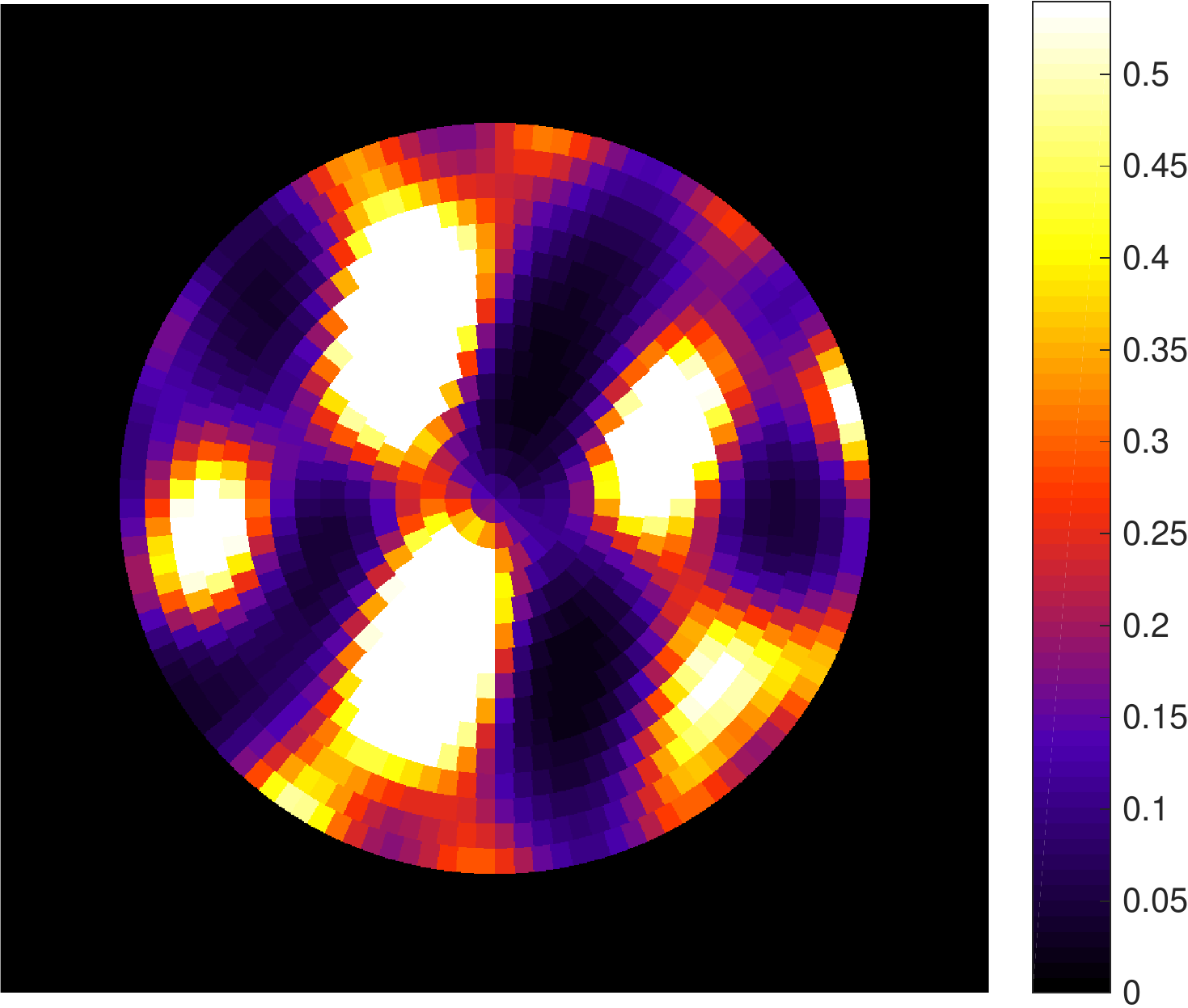}} 
} 
\caption{Simulated data. The reconstructions of conductivity obtained by fixing the domain to be a disk of radius $\rho_0 = 17.5$ with equally spaced electrodes attached to its boundary. The left-hand image corresponds to the target in the top row of Figure~\ref{fig:simulated1} and the right-hand image to that in the bottom row of Figure~\ref{fig:simulated1}. The color axes are those used in Figure~\ref{fig:simulated1}, but the highest reconstructed values are approximately $1.49$ (left) and $4.17$ (right).}
\label{fig:naive}
\end{figure}

\subsection{Experimental data}

We next apply our algorithm to four sets of experimental data from two water tanks: a thorax-shaped with circumference $106\,{\rm cm}$ and a deformable one with circumference $86 \, {\rm cm}$ (cf.~Figures~\ref{fig:tank1} and \ref{fig:tank2}). Both tanks have $M=16$ rectangular metallic electrodes of width $2\,{\rm cm}$ attached to their interior lateral surface.  In each experiment, the considered water tank contains vertically homogeneous embedded cylinders of steel and/or plastic extending from the bottom all the way through the water surface. The water level is controlled so that the tanks are always filled with tap water up to the top of the electrodes, which are of height $5 \, {\rm cm}$ for the thorax-shaped tank and of height $7 \, {\rm cm}$ for the deformable one. The measurements were performed with low-frequency ($1\,{\rm kHz}$) alternating current using the {\em Kuopio impedance tomography} (KIT4) device~\cite{Kourunen09}. The phase information of the measurements is ignored, meaning that the amplitudes of electrode currents and potentials are interpreted as real numbers.  

\begin{figure}
\center{
{\includegraphics[height=2in]{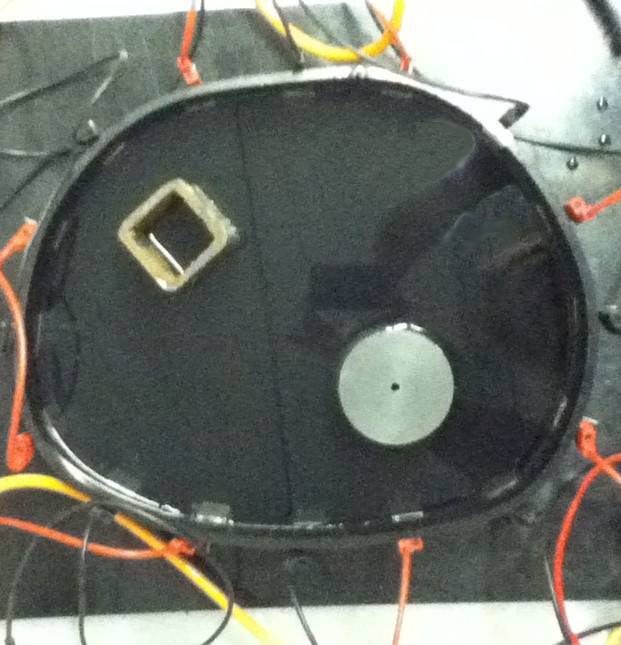}}
\qquad
{\includegraphics[height=2in]{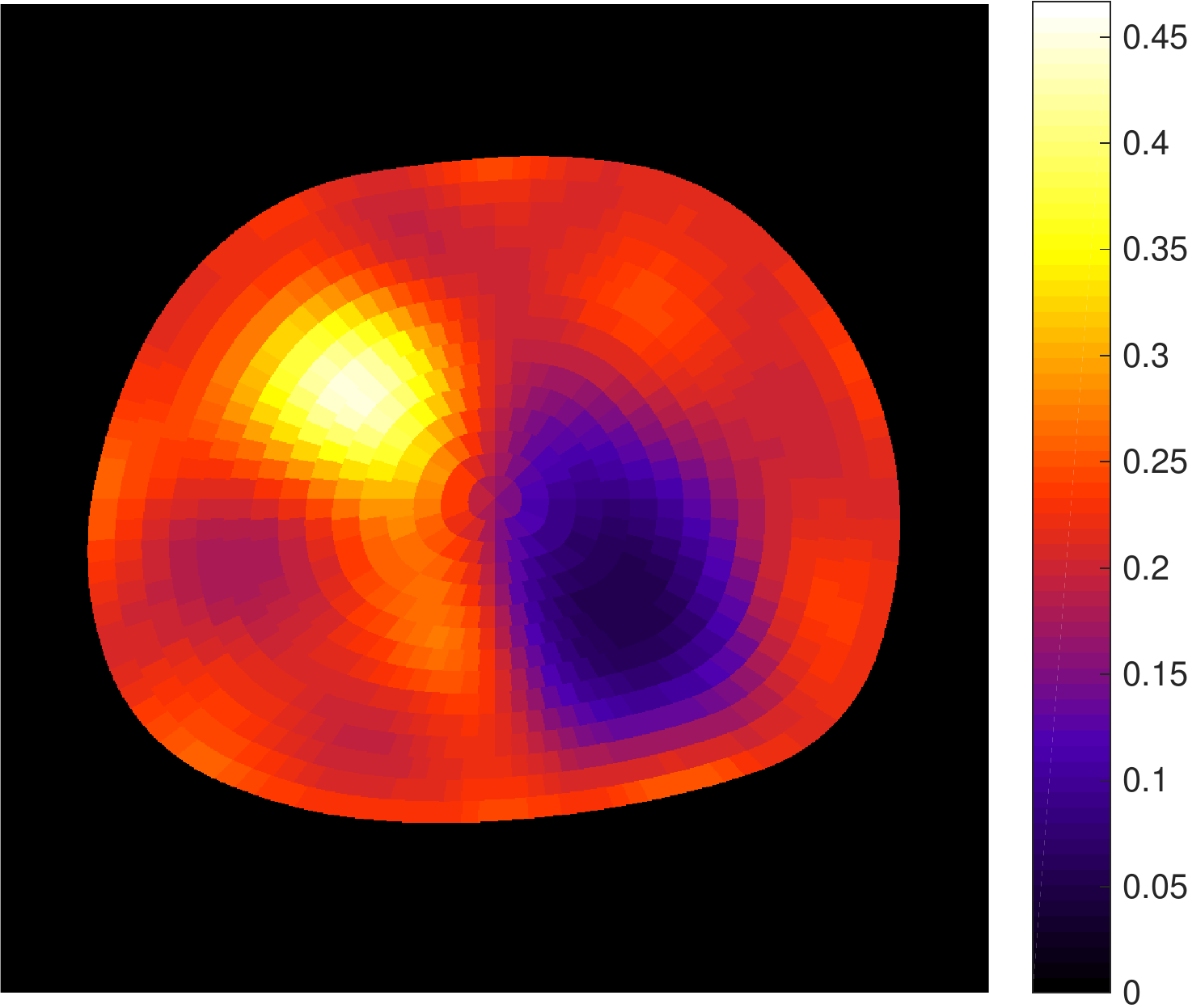}} 
} \\
\hspace{3mm}
\center{
{\includegraphics[height=2in]{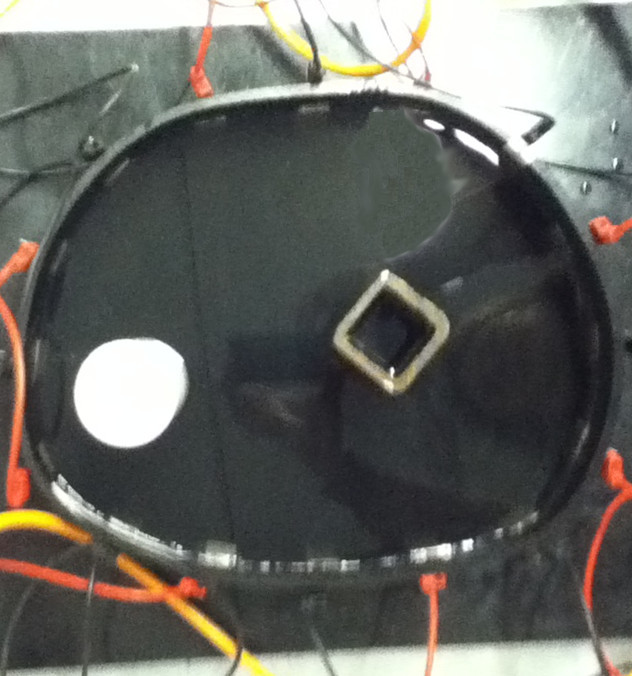}}
\qquad
{\includegraphics[height=2in]{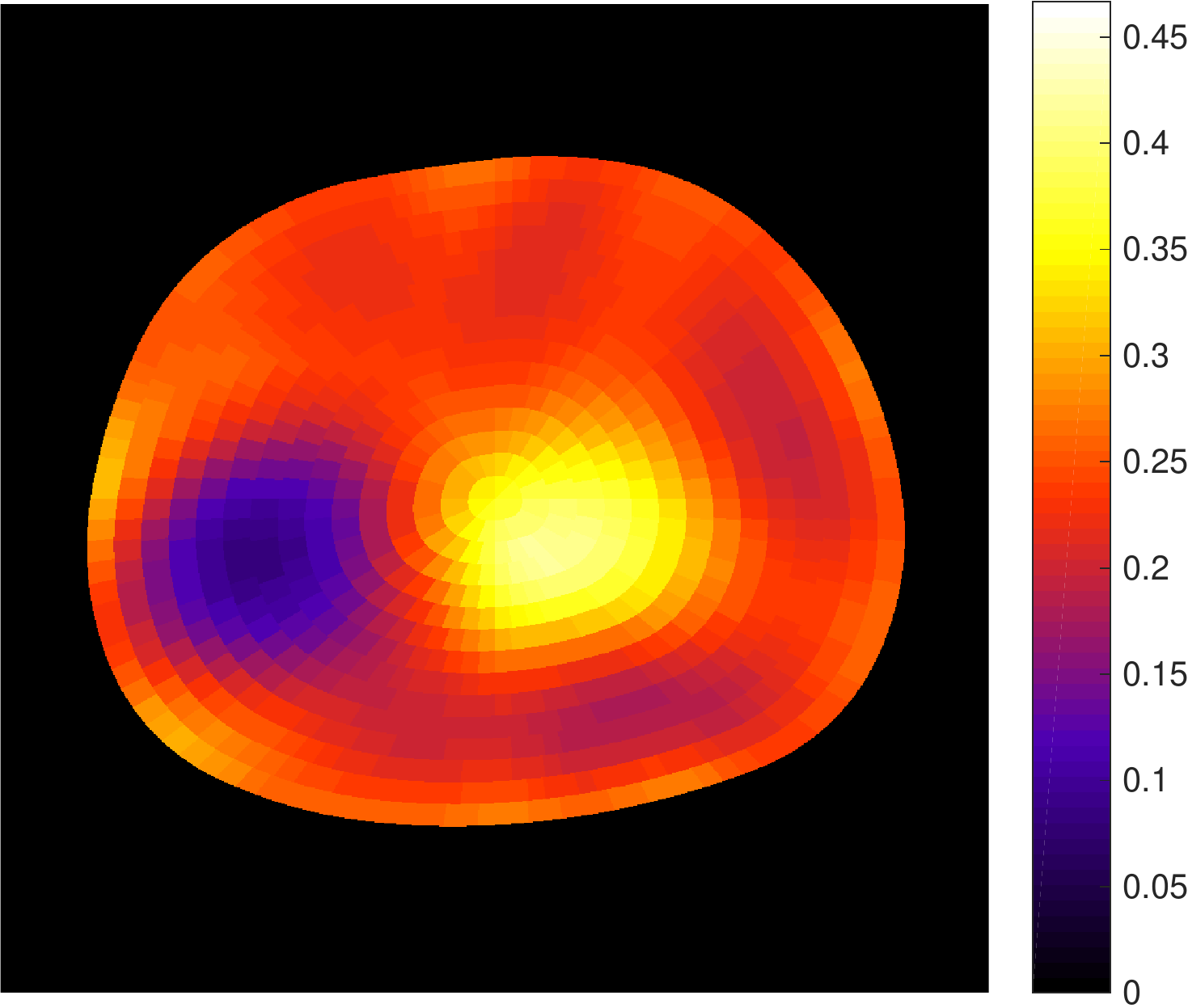}}
}
\caption{Experimental data from a thorax-shaped tank. Left: the target configurations. Right: the reconstructions. The unit of conductivity is ${\rm mS}/{\rm cm}$. The images are not in scale; the circumferences of the tank and the reconstructions are given in the text.}
\label{fig:tank1}
\end{figure}

As the measurement configurations are vertically homogeneous and no current flows through the top or the bottom of the water layer, one can employ the two-dimensional version of the CEM as the forward model. For a discussion on the conversion of units between two and three spatial dimensions, see,~e.g.,~\cite{Hyvonen15}. To put it short, if the voltage measurements on the electrodes are multiplied or, alternatively, the net currents are divided by the height of the tank, a two-dimensional inverse solver automatically produces reconstructions in the proper three-dimensional units. We take here the latter approach based on the (three-dimensional) current patterns~\eqref{current_input}; prior to the scaling by the tank height, the unit of current is mA. The regularization parameter is chosen as
$$
\lambda = 2\cdot 10^{-3} \max_{j,k=1,\dots, Q} \big| \mathcal{V}_j - \mathcal{V}_k \big|
$$
for both tanks.
Loosely speaking, the Bayesian interpretation of this is that we expect roughly $0.1\,\%$ of measurement noise as in the case of simulated data.

\begin{figure}
\center{
{\includegraphics[height=2in]{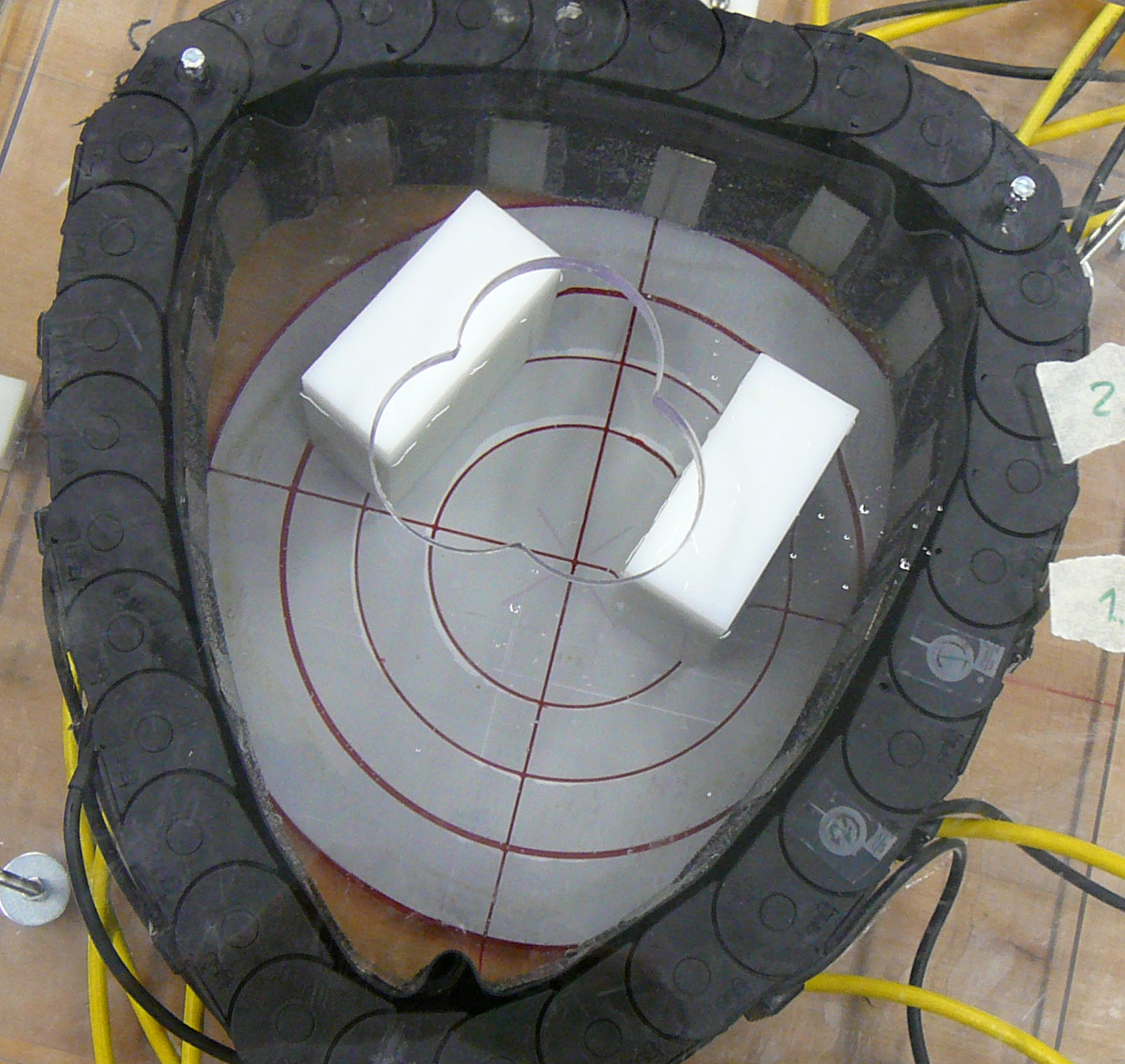}}
\qquad
{\includegraphics[height=2in]{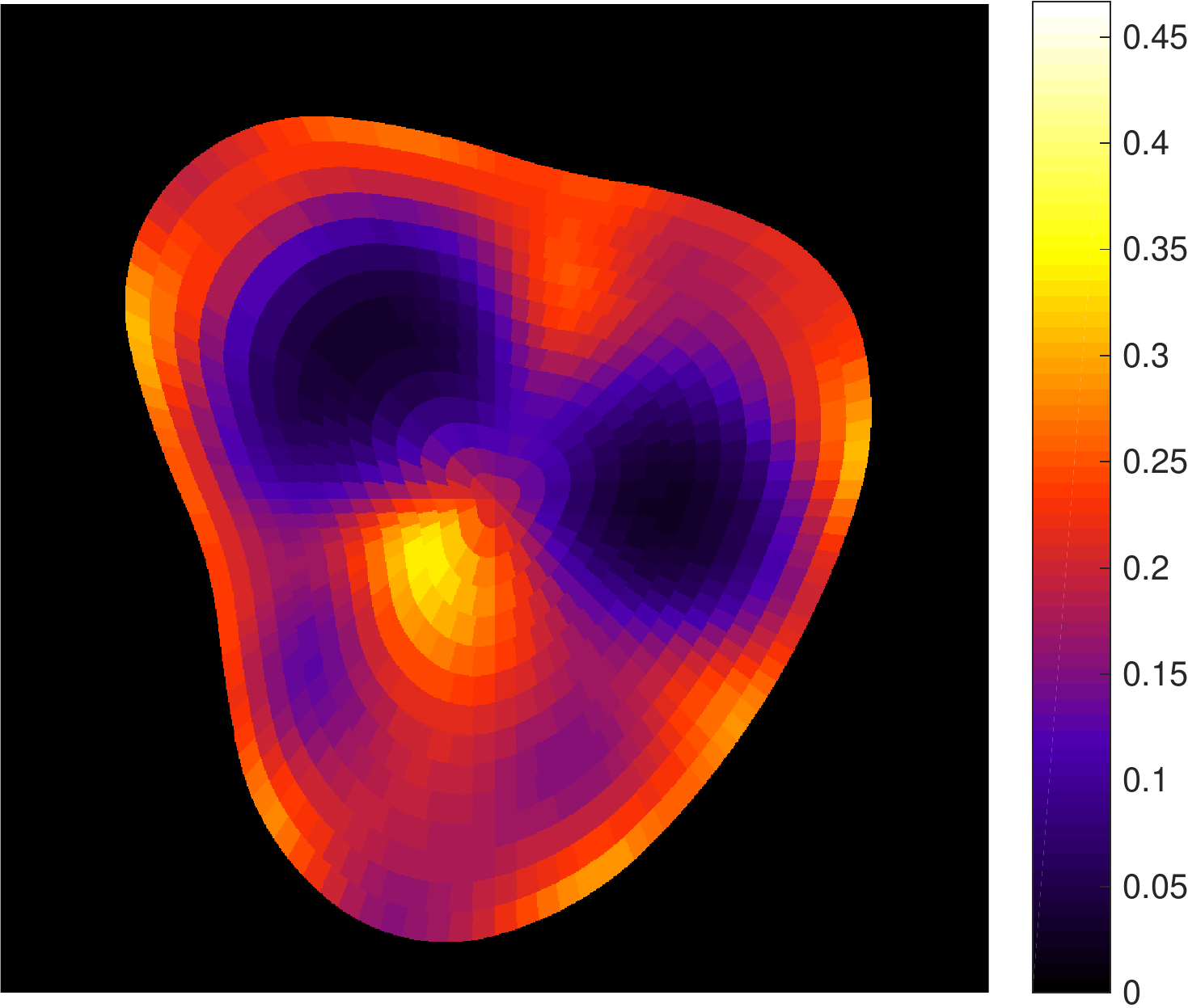}}\\
}
\hspace{3mm}
\center{
{\includegraphics[height=2in]{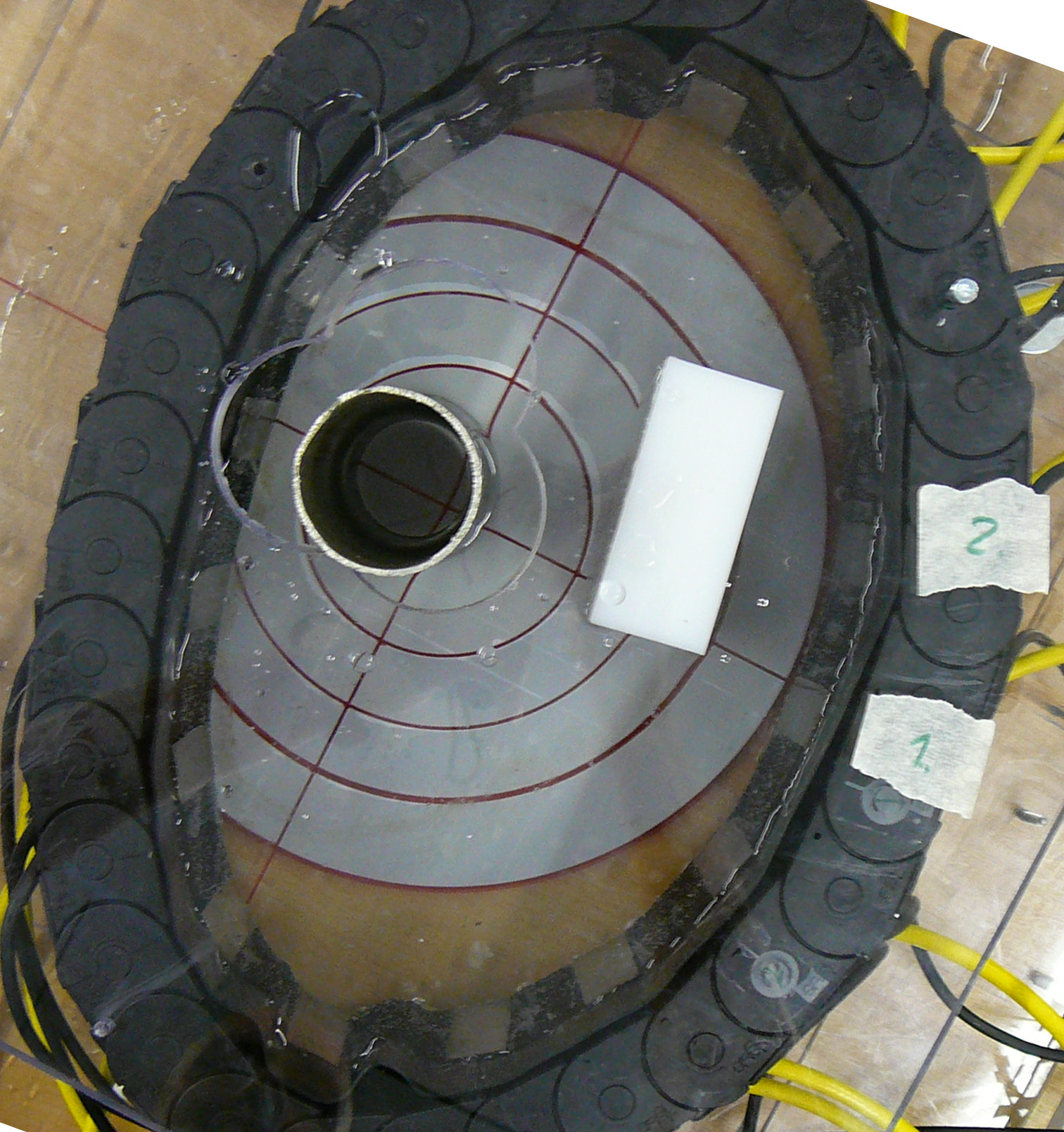}}
\qquad
{\includegraphics[height=2in]{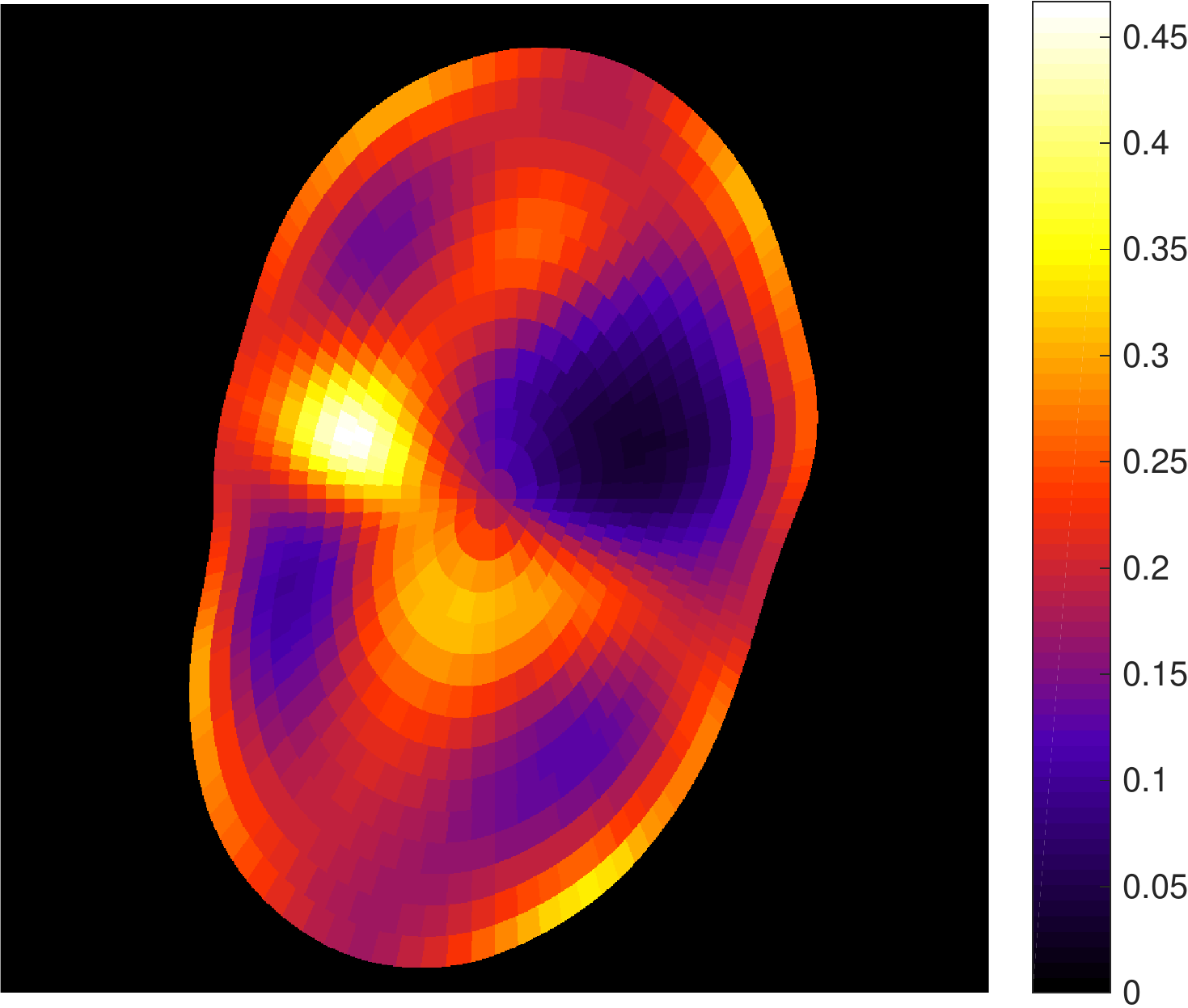}}
}
\caption{Experimental data from a deformable tank. Left: the target configurations. Right: the reconstructions. The unit of conductivity is ${\rm mS}/{\rm cm}$. The images are not in scale; the circumferences of the tank and the reconstructions are given in the text.}
\label{fig:tank2}
\end{figure}

The conductivity reconstructions corresponding to the thorax-shaped tank are presented in Figure~\ref{fig:tank1}.
The two target configurations are shown in the left-hand column and the corresponding reconstructions on the right. In both cases, there are two cylinders placed inside the tank: a metallic one with square cross-section and a plastic one with round cross-section. For both targets, the reconstruction of the tank boundary is accurate and the approximate positions of the inclusions can also be deduced from the images in the right-hand column of Figure~\ref{fig:tank1}. However, especially the insulating inclusions appear blurred in the reconstructions and there are also some oscillations in the estimated background conductivity level.
Both reconstructions have a circumference of about $112\,{\rm cm}$.

The reconstructions corresponding to the deformable tank are presented in Figure~\ref{fig:tank2},
which is organized in the same way as Figure~\ref{fig:tank1}: The target configurations are shown in the left-hand column and the associated reconstructions in the right-hand column. The water tank has been bent into two different shapes. The corresponding conductivity phantoms consist of two pieces of plastic with rectangular cross-sections and of one round steel cylinder and a rectangular body of plastic, respectively.
The reconstructions are not quite as informative as for the thorax-shaped tank, which is inline with our experience of expecting data from the deformable tank to be of lower quality.
The shapes of the exterior boundary are not reproduced as accurately, the inclusions appear more blurred and the variations in the background conductivity level are notable. It seems that the algorithm tries to explain some of the data variations originating from the inhomogeneities by deforming the object boundary.
The reconstruction circumferences $116\,{\rm cm}$ (top) and $114\,{\rm cm}$ (bottom) are also quite far off the mark;
our hypothesis is that the algorithm compensates for the overestimation of the tank size by downtuning the contact resistances (cf.~the top right image of Figure~\ref{fig:simulated2}).
In any case, the reconstructions in Figure~\ref{fig:tank2} still carry useful information about the corresponding targets. In particular, they are far better than ones obtained by altogether ignoring the incompleteness of the information on the measurement geometry and computing the conductivity reconstruction in, e.g., a disk (cf.~\cite{Darde13b}):
In Figure~\ref{fig:tank2naive} we present the reconstructions similar to those in Figure~\ref{fig:naive}.
Now the geometry is fixed to a disk having the correct circumference of $86 \, {\rm cm}$ and equiangled electrodes.

\begin{figure}
\center{
{\includegraphics[height=2in]{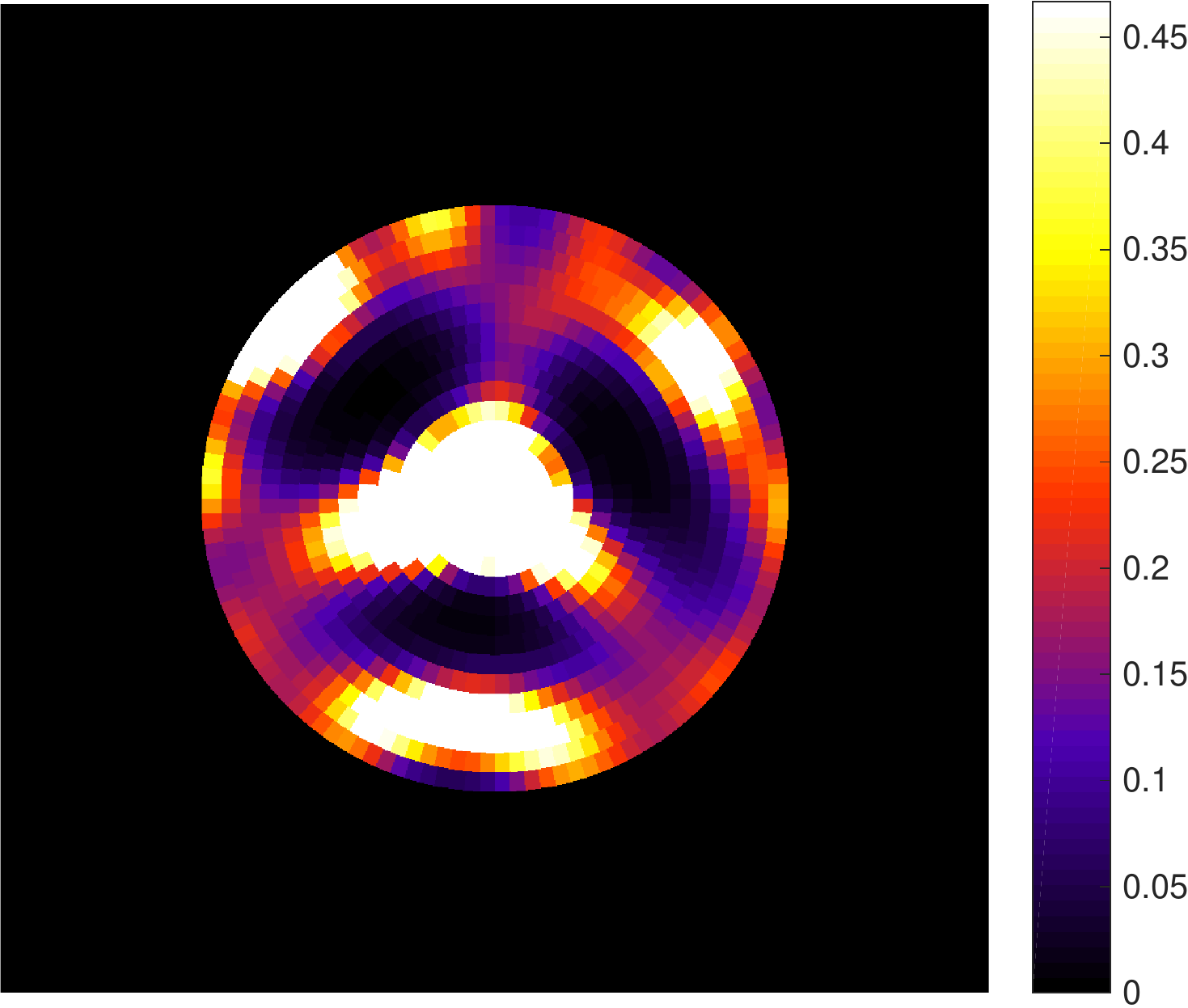}}
\qquad
{\includegraphics[height=2in]{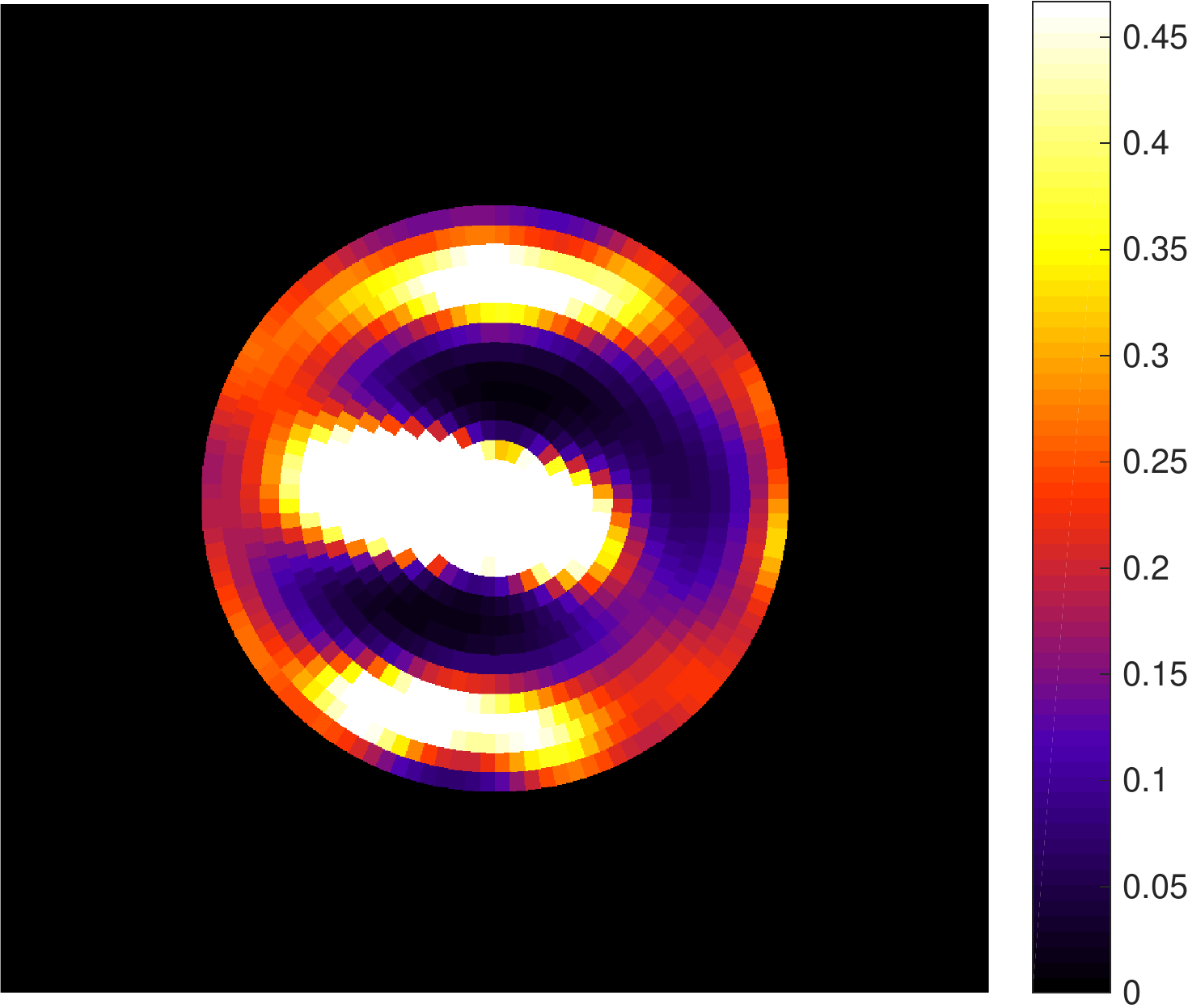}}
}
\caption{Reconstructions based on the data from a deformable tank and fixed geometry. The left image corresponds to the top row of Figure~\ref{fig:tank2} and the right image to the bottom row. The highest reconstructed values are approximately $34.2$ (left) and $14.6$ (right).}
\label{fig:tank2naive}
\end{figure}

\section{Conclusion}
\label{sec:conclusion}
By employing cFEM, we have introduced a numerical algorithm that is capable of simultaneously producing reasonable reconstructions of the conductivity 
and the exterior boundary shape of the examined body in EIT from both simulated and experimental data.
The algorithm consists of two stages: In the pre-measurement processing, a polynomial surrogate model is formed for the CEM. This is the computationally expensive part of the proposed method, but it can fortunately be carried out off-line prior to any actual measurements, assuming there is enough general-level information available on the measurement set-up (the approximate size and conductivity level of the body, the shape and number of the electrodes etc.). The post-measurement processing consists of minimizing a sum of squares of multivariate polynomials, which does not demand a lot of computation time --- unless the polynomial order 
in the surrogate model is high. Our numerical experiments were based on second-degree polynomials and approximately a thousand parameters, which resulted in post-processing times of only a few seconds.

\section*{Acknowledgments}
We would like to thank Professor Jari Kaipio's research group at the University of Eastern Finland (Kuopio) for granting us access to their EIT devices.

% Bibliography using bibtex
\bibliographystyle{acm}
\bibliography{sceit-refs}

\begin{thebibliography}{10}

\bibitem{Adams03}
{\sc Adams, R.~A., and Fournier, J. J.~F.}
\newblock {\em Sobolev spaces}, second~ed., vol.~140 of {\em Pure and Applied
  Mathematics}.
\newblock Elsevier/Academic Press, Amsterdam, 2003.

\bibitem{Alberti16}
{\sc Alberti, G.~S., Ammari, H., Jin, B., Seo, J.-K., and Zhang, W.}
\newblock The linearized inverse problem in multifrequency electrical impedance
  tomography, 2016.
\newblock arXiv:1602.04312.

\bibitem{Babuska10}
{\sc Babu\v{s}ka, I., Nobile, F., and Tempone, R.}
\newblock A stochastic collocation method for elliptic partial differential
  equations with random input data.
\newblock {\em SIAM Rev. 52\/} (2010), 317--355.

\bibitem{Barber84}
{\sc Barber, D.~C., and Brown, B.~H.}
\newblock Applied potential tomography.
\newblock {\em J. Phys. E: Sci. Instrum. 17\/} (1984), 723--733.

\bibitem{Barber88}
{\sc Barber, D.~C., and Brown, B.~H.}
\newblock Errors in reconstruction of resistivity images using a linear
  reconstruction technique.
\newblock {\em Clin. Phys. Physiol. Meas. 9\/} (1988), 101--104.

\bibitem{Borcea02}
{\sc Borcea, L.}
\newblock Electrical impedance tomography.
\newblock {\em Inverse problems 18\/} (2002), R99--R136.

\bibitem{Boulkhemair07}
{\sc Boulkhemair, A., and Chakib, A.}
\newblock On the uniform {P}oincar\'{e} inequality.
\newblock {\em Comm. Partial Differential Equations 32\/} (2007), 1439--1447.

\bibitem{Breckon88}
{\sc Breckon, W., and Pidcock, M.}
\newblock Data errors and reconstruction algorithms in electrical impedance
  tomography.
\newblock {\em Clin. Phys. Physiol. Meas. 9\/} (1988), 105--109.

\bibitem{Cheney99}
{\sc Cheney, M., Isaacson, D., and Newell, J.}
\newblock Electrical impedance tomography.
\newblock {\em SIAM Rev. 41\/} (1999), 85--101.

\bibitem{Cheng89}
{\sc Cheng, K.-S., Isaacson, D., Newell, J.~S., and Gisser, D.~G.}
\newblock Electrode models for electric current computed tomography.
\newblock {\em IEEE Trans. Biomed. Eng. 36\/} (1989), 918--924.

\bibitem{Costabel96}
{\sc Costabel, M., and Dauge, M.}
\newblock A singularly mixed perturbed boundary value problem.
\newblock {\em Comm. Partial Differential Equations 21\/} (1996), 1919--1949.

\bibitem{Darde12}
{\sc Dard\'e, J., Hakula, H., Hyv\"onen, N., and Staboulis, S.}
\newblock Fine-tuning electrode information in electrical impedance tomography.
\newblock {\em Inverse Probl. Imag. 6\/} (2012), 399--421.

\bibitem{Darde13a}
{\sc Dard\'e, J., Hyv\"onen, N., Sepp\"anen, A., and Staboulis, S.}
\newblock Simultaneous reconstruction of outer boundary shape and admittance
  distribution in electrical impedance tomography.
\newblock {\em SIAM J. Imaging Sci. 6\/} (2013), 176--198.

\bibitem{Darde13b}
{\sc Dard{\'e}, J., Hyv{\"o}nen, N., Sepp{\"a}nen, A., and Staboulis, S.}
\newblock Simultaneous recovery of admittivity and body shape in electrical
  impedance tomography: An experimental evaluation.
\newblock {\em Inverse Problems 29\/} (2013), 085004.

\bibitem{Darde16}
{\sc Dard\'e, J., and Staboulis, S.}
\newblock Electrode modelling: The effect of contact impedance.
\newblock {\em ESAIM: Math. Model. Num. 50\/} (2016), 415--431.

\bibitem{Gautschi04}
{\sc Gautschi, W.}
\newblock {\em Orthogonal Polynomials: Computation and Approximation}.
\newblock Numerical Mathematics and Scientific Computation. Oxford University
  Press, New York, 2004.
\newblock Oxford Science Publications.

\bibitem{Hakula14}
{\sc Hakula, H., Hyv\"onen, N., and Leinonen, M.}
\newblock Reconstruction algorithm based on stochastic {G}alerkin finite
  element method for electrical impedance tomography.
\newblock {\em Inverse Problems 30\/} (2014), 065006.

\bibitem{Hanke11b}
{\sc Hanke, M., Harrach, B., and Hyv{\"o}nen, N.}
\newblock Justification of point electrode models in electrical impedance
  tomography.
\newblock {\em Math. Models Methods Appl. Sci. 21\/} (2011), 1395--1413.

\bibitem{Hiptmair15}
{\sc Hiptmair, R., Scarabosio, L., Schillings, C., and Schwab, {\relax Ch}.}
\newblock Large deformation shape uncertainty quantification in acoustic
  scattering.
\newblock Tech. Rep. No.~2015-31, Seminar for Applied Mathematics, ETH, 2015.

\bibitem{Hollig03}
{\sc H{\"o}llig, K., and H{\"o}rner, J.}
\newblock {\em Finite Element Methods with B-Splines}, vol.~26 of {\em
  Frontiers in Applied Mathematics}.
\newblock Society for Industrial and Applied Mathematics (SIAM), Philadelphia,
  PA, 2003.

\bibitem{Hyvonen04}
{\sc Hyv\"onen, N.}
\newblock Complete electrode model of electrical impedance tomography:
  Approximation properties and characterization of inclusions.
\newblock {\em SIAM J.~App.~Math. 64\/} (2004), 902--931.

\bibitem{Hyvonen09}
{\sc Hyv{\"o}nen, N.}
\newblock Approximating idealized boundary data of electric impedance
  tomography by electrode measurements.
\newblock {\em Math. Models Methods Appl. Sci. 19\/} (2009), 1185--1202.

\bibitem{Hyvonen15}
{\sc Hyv\"onen, N., and Leinonen, M.}
\newblock Stochastic {G}alerkin finite element method with local conductivity
  basis for electrical impedance tomography.
\newblock {\em SIAM/ASA J. Uncertainty Quantification 3\/} (2015), 998--1019.

\bibitem{Kaipio05}
{\sc Kaipio, J., and Somersalo, E.}
\newblock {\em Statistical and Computational Inverse Problems}.
\newblock Springer, 2005.

\bibitem{Kaipio04}
{\sc Kaipio, J.~P., Sepp\"anen, A., Somersalo, E., and Haario, H.}
\newblock Posterior covariance related optimal current patterns in electrical
  impedance tomography.
\newblock {\em Inverse Problems 20\/} (2004), 919--936.

\bibitem{Kolehmainen05}
{\sc Kolehmainen, V., Lassas, M., and Ola, P.}
\newblock Inverse conductivity problem with an imperfectly known boundary.
\newblock {\em SIAM J. App. Math. 66\/} (2005), 365--383.

\bibitem{Kolehmainen07}
{\sc Kolehmainen, V., Lassas, M., and Ola, P.}
\newblock The inverse conductivity problem with an imperfectly known boundary
  in three dimensions.
\newblock {\em SIAM J. Appl. Math. 67\/} (2007), 1440--1452.

\bibitem{Kolehmainen97}
{\sc Kolehmainen, V., Vauhkonen, M., Karjalainen, P.~A., and Kaipio, J.~P.}
\newblock Assessment of errors in static electrical impedance tomography with
  adjacent and trigonometric current patterns.
\newblock {\em Physiol. Meas. 18\/} (1997), 289--303.

\bibitem{Kourunen09}
{\sc Kourunen, J., Savolainen, T., Lehikoinen, A., Vauhkonen, M., and
  Heikkinen, L.~M.}
\newblock Suitability of a {PXI} platform for an electrical impedance
  tomography system.
\newblock {\em Meas. Sci. Technol. 20\/} (2009), 015503.

\bibitem{Lions72}
{\sc Lions, J.~L., and Magenes, E.}
\newblock {\em Non-homogeneous boundary value problems and applications},
  vol.~1.
\newblock Springer-Verlag, 1973.
\newblock Translated from French by P. Kenneth.

\bibitem{Mustonen15}
{\sc Mustonen, L.}
\newblock Numerical study of a parametric parabolic equation and a related
  inverse boundary value problem.
\newblock Submitted, arXiv:1506.01559.

\bibitem{Nissinen11}
{\sc Nissinen, A., Kolehmainen, V., and Kaipio, J.~P.}
\newblock Compensation of modelling errors due to unknown domain boundary in
  electrical impedance tomography.
\newblock {\em IEEE Trans. Med. Imag. 30\/} (2011), 231--242.

\bibitem{Nissinen11b}
{\sc Nissinen, A., Kolehmainen, V., and Kaipio, J.~P.}
\newblock Reconstruction of domain boundary and conductivity in electrical
  impedance tomography using the approximation error approach.
\newblock {\em Int. J. Uncertainty Quantif. 1\/} (2011), 203--222.

\bibitem{Nocedal99}
{\sc Nocedal, J., and Wright, S.~J.}
\newblock {\em Numerical Optimization}.
\newblock Springer, 1999.

\bibitem{NovRit96}
{\sc Novak, E., and Ritter, K.}
\newblock High dimensional integration of smooth functions over cubes.
\newblock {\em Numer. Math. 75}, 1 (1996), 79--97.

\bibitem{NovRit99}
{\sc Novak, E., and Ritter, K.}
\newblock Simple cubature formulas with high polynomial exactness.
\newblock {\em Constr. Approx. 15}, 4 (1999), 499--522.

\bibitem{Rossi08}
{\sc Rossi, J.~D.}
\newblock First variations of the best {S}obolev trace constant with respect to
  the domain.
\newblock {\em Canad. Math. Bull. 51}, 1 (2008), 140--145.

\bibitem{Schwab11a}
{\sc Schwab, {\relax Ch}., and Gittelson, C.~J.}
\newblock Sparse tensor discretizations of high-dimensional parametric and
  stochastic {PDE}s.
\newblock {\em Acta Numer. 20\/} (2011), 291--467.

\bibitem{Schwab12}
{\sc Schwab, {\relax Ch}., and Stuart, A.~M.}
\newblock Sparse deterministic approximation of {B}ayesian inverse problems.
\newblock {\em Inverse Problems 28\/} (2012), 045003.

\bibitem{Smolyak63}
{\sc Smolyak, S.}
\newblock Quadrature and interpolation formulas for tensor products of certain
  classes of functions.
\newblock {\em Soviet Mathematics 4\/} (1963), 240--243.

\bibitem{Somersalo92}
{\sc Somersalo, E., Cheney, M., and Isaacson, D.}
\newblock Existence and uniqueness for electrode models for electric current
  computed tomography.
\newblock {\em SIAM J. Appl. Math. 52\/} (1992), 1023--1040.

\bibitem{Uhlmann09}
{\sc Uhlmann, G.}
\newblock Electrical impedance tomography and {C}alder{\'o}n's problem.
\newblock {\em Inverse Problems 25\/} (2009), 123011.

\bibitem{Vilhunen02}
{\sc Vilhunen, T., Kaipio, J.~P., Vauhkonen, P.~J., Savolainen, T., and
  Vauhkonen, M.}
\newblock Simultaneous reconstruction of electrode contact impedances and
  internal electrical properties: {I}. {T}heory.
\newblock {\em Meas. Sci. Technol. 13\/} (2002), 1848–1854.

\bibitem{WasilWoz95}
{\sc Wasilkowski, G., and Wo\`{z}niakowski, H.}
\newblock Explicit cost bounds of algorithms for multivariate tensor product
  problems.
\newblock {\em J. Complexity 11\/} (1995), 1--56.

\bibitem{Xiu06}
{\sc Xiu, D.}
\newblock Efficient collocational approach for parametric uncertainty analysis.
\newblock {\em Commun. Comput. Phys 2}, 2 (2006), 293--309.

\end{thebibliography}
\end{document}